\newtheorem{theorem}{Theorem}[section]
\newtheorem{lemma}[theorem]{Lemma}
\newtheorem{proposition}[theorem]{Proposition}
\theoremstyle{definition}
\newtheorem{definition}[theorem]{Definition}
\newtheorem{example}[theorem]{Example}
\theoremstyle{remark}
\numberwithin{equation}{section}
\def\R{\mathbb R}
\def\C{\mathbb C}
\def\supp{\text{supp}}
\def\({\left(}
\def\){\right)}
\def\[{\left[}
\def\]{\right]}
\def\<{\left<}
\def\>{\right>}
\def\less{\lesssim}
\begin{document}

\title{Weighted Multilinear Square Functions Bounds}

\author{Lucas Chaffee}
\address{Department of Mathematics\\ University of Kansas\\ Lawrence  KS 66045}
\email{ lchaffee@math.ku.edu}
\author{Jarod Hart}
\address{Department of Mathematics\\ University of Kansas\\ Lawrence  KS 66045}
\email{ jhart@math.ku.edu}
\author{Lucas Oliveira}
\address{Departamento de Matem\'{a}tica\\UFRGS\\ Porto Alegre RS 91509-900}
\email{lucas.oliveira@ufrgs.br}

\thanks{Chaffee was supported in part by NSF Grant \#DMS1069015.}
\thanks{Hart was supported in part by NSF Grant \#DMS1069015.}
\thanks{Oliveira was supported in part by CAPES-Processo 2314118}


\subjclass[2011]{Primary 42B02; Secondary 44A02}

\date{February 14, 2012.}

\dedicatory{ }

\keywords{Square Function, Littlewood-Paley, Bilinear, Calder\'on-Zygmund Operators}

\begin{abstract}
In this work we study boundedness of Littlewood-Paley-Stein square functions associated to multilinear operators.  We prove weighted Lebesgue space bounds for square functions under relaxed regularity and cancellation conditions that are independent of weights, which is a new result even in the linear case.  For a class of multilinear convolution operators, we prove necessary and sufficient conditions for weighted Lebesgue space bounds.  Using extrapolation theory, we extend weighted bounds in the multilinear setting for Lebesgue spaces with index smaller than one.
\end{abstract}

\maketitle

\section{Introduction}

Given a function $\psi:\R^n\rightarrow\C$, define $\psi_t(x)=t^{-n}\psi(t^{-1}x)$ and the associated Littlewood-Paley-Stein type square function
\begin{align}
g_\psi(f)=\(\int_0^\infty|\psi_t*f|^2\frac{dt}{t}\)^\frac{1}{2}.\label{firstsqfunction}
\end{align}
These convolution type square functions were introduced by Stein in the 1960's, see e.g. \cite{S1} or \cite{S2}, and have been studied extensively since then, including classical works by Stein \cite{S1}, Kurtz \cite{K}, Duoandikoetxea-Rubio de Francia \cite{DRdF}, and more recently Duoandikoetxea-Seijo \cite{DS}, Cheng \cite{Che}, Sato \cite{Sa}, Duoandikoetxea \cite{D1}, Wilson \cite{W}, Lerner \cite{L}, and Cruz-Uribe-Martell-Perez \cite{CUMP}.  Of particular interest of these, \cite{K}, \cite{DS}, \cite{Sa}, \cite{W}, \cite{CUMP}, and \cite{L} prove bounds for $g_\psi$ on weighted Lebesgue spaces under various conditions on $\psi$.  Non-convolution variants of \eqref{firstsqfunction} were studied by Carleson \cite{C}, David-Journ\'e-Semmes \cite{DJS}, Christ-Journ\'e \cite{CJ}, Semmes \cite{Se}, Hofmann \cite{Ho1,Ho2}, and Auscher \cite{A} where they replaced the convolution $\psi_t*f(x)$ with 
\begin{align*}
\Theta_tf(x)=\int_{\R^n}\theta_t(x,y)f(y)dy.
\end{align*}
In \cite{DJS} and \cite{Se}, the authors proved $L^p$ bounds for square Littlewood-Paley-Stein square functions associated to $\Theta_t$ when $\Theta_t(b)=0$ for some para-accretive function $b$.  In \cite{Ho1,Ho2}, this type of mean zero assumption is replaced by a local cancellation testing condition on dyadic cubes.  In \cite{C}, \cite{CJ}, and \cite{A}, the authors replace mean zero assumption with a Carleson measure condition for $\theta_t$ to prove $L^2$ bounds for the square function.  The work of Carleson in \cite{C} was phrased as a characterization of $BMO$ in terms of Carleson measures, but non-convolution type square function bounds are implicit in his work.

In all of the works studying $g_\psi$ cited above, the authors assume that $\psi$ has mean zero.  In fact, if $g_\psi$ is bounded on $L^2$, then $\psi$ must have mean zero, but in the non-convolution setting, the mean zero condition is no longer a strictly necessary one, as demonstrated in \cite{C}, \cite{Ho1}, \cite{Ho2}, and \cite{A}.  This phenomena persists in the multilinear square function setting, and in this work we explore subtle cancellation conditions for multilinear convolution and non-convolution type square function and their interaction with weighted Lebesgue space estimates.

The non-convolution form of the kernel $\theta_t(x,y)$ allows for a natural extension to the multilinear setting.  Define for appropriate $\theta_t:\R^{(m+1)n}\rightarrow\C$
\begin{align}
&S(f_1,...,f_m)(x)=\(\int_0^\infty|\Theta_t(f_1,...,f_m)(x)|^2\frac{dt}{t}\)^\frac{1}{2},\text{ where }\label{sqfunction}\\
&\Theta_t(f_1,...,f_m)(x)=\int_{\R^{mn}}\theta_t(x,y_1,...,y_m)\prod_{i=1}^mf_i(y_i)d\vec y\label{theta}
\end{align}
where we use the notation $d\vec y=dy_1\cdots dy_m$.  When $m=1$, i.e. in the linear setting, this is the operator $\Theta_t$ mentioned above, so we use the same notation for it.  We wish to find cancellation conditions on $\theta_t$ that imply boundedness $S$, given that $\theta_t$ also satisfies some size and regularity estimates.  In particular, we assume that $\theta_t$ satisfies
\begin{align}
&|\theta_t(x,y_1,...,y_m)|\less\prod_{i=1}^m\frac{t^{-n}}{(1+t^{-1}|x-y_i|)^N}\label{size}\\
&|\theta_t(x,y_1,...,y_m)-\theta_t(x,y_1,...,y_i',...,y_m)|\less t^{-mn}(t^{-1}|y_i-y_i'|)^\gamma\label{regy}
\end{align}
 for all $x,y_1,...,y_m,y_1',...,y_m'\in\R^n$ and $i=1,...,m$ and some $N>n$ and $0<\gamma\leq1$.  Note that we do not require any regularity for $\theta_t(x,y_1,...,y_m)$ in the $x$ variable.  Square functions associated to this type of operators have been studied in a number of recent works.  In Maldonado \cite{M} and Maldonado-Naibo \cite{MN}, the authors introduce the operators \eqref{theta}, and making the natural extension of Semmes's point of view in \cite{Se} to prove bounds for a Besov type relative of the square function $S$ \eqref{sqfunction},
\begin{align*}
(f_1,...,f_m)\mapsto\(\int_0^\infty||\Theta_t(f_1,...,f_m)||_{L^p}^2\frac{dt}{t}\)^\frac{1}{2}.
\end{align*}
When $p=2$ this Besov type square function agrees with the square function \eqref{sqfunction}.  In \cite{Hart1}, \cite{GO}, and \cite{GLMY}, Hart, Grafakos-Oliveira, and Grafakos-Lui-Maldonado-Yang proved boundedness results for different versions of the square function $S$ in Lebesgue spaces under various cancellation and regularity conditions on $\theta_t$.  That is, in each of these works the authors proved bounds of the form $||S(f_1,...,f_m)||_{L^p}\less||f_1||_{L^{p_1}}\cdots||f_m||_{L^{p_m}}$, for minor modifications of $S$ in various ranges of indices $p,p_1,...,p_m$.  The first goal of this work includes proving a weighted version of these results,
\begin{align}
||S(f_1,...,f_m)||_{L^p(w^p)}\less\prod_{i=1}^m||f||_{L^{p_i}(w_i^{p_i})}\label{Lpbound}
\end{align}
for appropriate $1<p_1,...,p_m<\infty$, $w_i^{p_i}\in A_{p_i}$ and $w=w_1\cdots w_m$.  More generally, the main result of this work is the following theorem.

\begin{theorem}\label{t:main}
Assume $\theta_t$ satisfies \eqref{size} and \eqref{regy}.  Then the following cancellation conditions are equivalent
\begin{itemize}
\item[\it i.] $\Theta_t$ satisfies the strong Carleson condition,
\item[\it ii.]  $\Theta_t$ satisfies the Carleson and two cube testing conditions.
\end{itemize}
Furthermore, if the equivalent conditions {\it (i)} and {\it (ii)} hold, then $S$ satisfies \eqref{Lpbound} for all $w_i^{p_i}\in A_{p_i}$ where $w=w_1\cdots w_m$, $1<p_1,...,p_m<\infty$ satisfying $\frac{1}{p}=\frac{1}{p_1}+\cdots+\frac{1}{p_m}$, and $f_i\in L^{p_i}(w_i^{p_i})$.
\end{theorem}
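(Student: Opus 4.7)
The plan is to decouple the theorem into three tasks: (a) the equivalence of the two cancellation conditions, (b) an unweighted diagonal $L^2$ estimate, and (c) the weighted $L^p$ extension over the full range.

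For (a), the implication (i) $\Rightarrow$ (ii) should be immediate by specialization: testing the strong Carleson condition against the constants $1$ yields the ordinary Carleson condition, and testing against normalized characteristic functions of cubes yields the two cube testing condition. The converse (ii) $\Rightarrow$ (i) is the subtle direction. Given bounded inputs $b_1,\ldots,b_m$ and a cube $Q$, I would write each $b_i=b_i\chi_{2Q}+b_i\chi_{(2Q)^c}$ and expand $\Theta_t(b_1,\ldots,b_m)$ into $2^m$ terms. The $2^m-1$ terms involving at least one factor supported away from $2Q$ decay rapidly away from $Q$ via \eqref{size} with $N>n$, contributing a finite Carleson norm after a routine integration in $x$ and $t$. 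The remaining diagonal term, with all inputs supported in $2Q$, is reduced via a Whitney/atomic decomposition of each $b_i\chi_{2Q}$ into cube characteristic functions, the errors being controlled by the regularity \eqref{regy}; the two cube testing hypothesis then absorbs the main contributions while the Carleson condition handles the residual constants.

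For (b), I would follow the standard Calder\'on reproducing formula scheme: pick a radial $\varphi\in\S$ with $\int\varphi=1$, build Littlewood--Paley projections $Q_s^{(i)}$ and smooth approximations of identity $P_s^{(i)}$ satisfying $I=\int_0^\infty Q_s\,\frac{ds}{s}$ on mean-zero functions, and decompose each $f_i=P_s^{(i)}f_i+(I-P_s^{(i)})f_i$. The resulting $2^m$ pieces split into the purely low-frequency term, handled by pairing $\Theta_t(P_s^{(1)}f_1,\ldots,P_s^{(m)}f_m)$ against the Carleson measure supplied by condition (i) and invoking a Carleson embedding estimate, and the terms containing at least one high-frequency factor, for which \eqref{regy} produces off-diagonal decay of the form $\min(s/t,t/s)^\gamma$ and a Schur/almost-orthogonality argument yields the diagonal $L^2$ bound $S:L^{2m}\times\cdots\times L^{2m}\to L^2$.

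For (c), having the diagonal unweighted bound and the kernel estimates \eqref{size}--\eqref{regy}, the weighted estimate \eqref{Lpbound} follows by either a pointwise sparse domination of $S$ by a multilinear sparse averaging operator, or a Cotlar-type pointwise control by the multilinear Hardy--Littlewood maximal function $\mathcal{M}$ together with its weighted bounds on $A_{\vec p}$ weights; the regime $p<1$ is then reached by multilinear extrapolation, as advertised in the abstract. I expect the chief obstacle to be the direction (ii) $\Rightarrow$ (i): bounding the Carleson norm of $|\Theta_t(b_1,\ldots,b_m)|^2\frac{dx\,dt}{t}$ uniformly over all $L^\infty$ tuples using only cube testing is delicate, especially because \eqref{regy} gives regularity only in the $y$-variables, forcing the $x$-argument to be handled entirely through the size bound \eqref{size} and the testing hypothesis.
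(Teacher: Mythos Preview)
Your plan rests on a misreading of the strong Carleson condition. In this paper it is \emph{not} the statement that $|\Theta_t(b_1,\ldots,b_m)(x)|^2\frac{dx\,dt}{t}$ is a Carleson measure uniformly over bounded tuples $(b_1,\ldots,b_m)$; it is a pointwise strengthening of the ordinary Carleson condition applied only to $\Theta_t(1,\ldots,1)$, namely
\[
\sup_Q\sup_{x\in Q}\int_0^{\ell(Q)}|\Theta_t(1,\ldots,1)(x)|^2\,\frac{dt}{t}<\infty,
\]
so the average $\frac{1}{|Q|}\int_Q$ in the Carleson norm is replaced by $\sup_{x\in Q}$. The two-cube condition reads
\[
\sup_{R\subset Q}\frac{1}{|R|}\int_R\int_{\ell(R)}^{\ell(Q)}\big|\Theta_t(\chi_{(2R)^c},\ldots,\chi_{(2R)^c})-\Theta_t(\chi_{(2Q)^c},\ldots,\chi_{(2Q)^c})\big|^2\frac{dt}{t}\,dx<\infty,
\]
with complements of doubled cubes in every slot. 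Neither direction of (a) therefore goes as you describe: (i)$\Rightarrow$(ii) is not a specialization (strong Carleson gives Carleson trivially, but deriving the two-cube estimate still needs a telescoping decomposition and the size bound \eqref{size}), and (ii)$\Rightarrow$(i) has nothing to do with arbitrary $L^\infty$ inputs. The paper's route for (ii)$\Rightarrow$(i) is to first extract unweighted $L^2$ boundedness of $S$ from the Carleson condition alone, then set $G_Q(x)=\chi_Q(x)\int_0^{\ell(Q)}|\Theta_t(1,\ldots,1)(x)|^2\frac{dt}{t}$ and bound $\|G_Q\|_{L^\infty}$ via $G_Q\le MG_Q$; on each subcube $R\subset Q$ one expands $1=\chi_{2R}+\chi_{(2R)^c}$ in every slot, handles the local piece with the $L^2$ bound of $S$, the mixed and small-$t$ pieces with \eqref{size}, and the residual tail over $t\in(\ell(R),\ell(Q))$ with all arguments $\chi_{(2R)^c}$ is exactly where the two-cube hypothesis is spent.

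For (b)--(c) your almost-orthogonality scheme is in the right spirit, but the paper does not go through an unweighted diagonal estimate followed by sparse domination. It proves the weighted bound directly at $p=2$: the reduced case $\Theta_t(1,\ldots,1)\equiv0$ is handled by the reproducing formula and a pointwise estimate $|\Theta_t\Pi_{j,s}(\vec f)(x)|\lesssim(s/t\wedge t/s)^{\gamma'}\sum_k MQ_s^{2,k}f_j(x)\prod_{i\ne j}Mf_i(x)$, reducing to known weighted vector-valued maximal and $g$-function bounds; the general case uses the paraproduct splitting $\Theta_t=R_t+M_{\Theta_t(1,\ldots,1)}\mathbb P_t$, the second piece being controlled by a weighted Carleson embedding that relies precisely on the $\sup_{x\in Q}$ form of the strong Carleson condition. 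The full range $1/m<p<\infty$ then comes from Grafakos--Martell extrapolation rather than sparse bounds.
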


For the definitions of the Carleson, strong Carleson, and two-cube testing conditions, see Section 3.  For now we only note that conditions quantify some cancellation of $\theta_t$ and that $\Theta_t(1,...,1)=0$ for all $t>0$ implies all three of these conditions.  It is of interest to note that there is no mention of weighted estimates in the hypotheses of Theorem \ref{t:main}, but we conclude boundedness of $S$ in weighted Lebesgue spaces.  Also this is the first result for multilinear square functions of this type where $S$ is bounded for $1/m<p<2$ and $\Theta_t(1,...,1)$ is not necessarily zero for all $t$.

An approach that has been used to prove bounds for $S$ with $1/m<p\leq1$ is to view $\{\Theta_t\}_{t>0}$ as a Calder\'on-Zygmund taking values in $L^2(\R_+,\frac{dt}{t})$, and reproduce the classical Calder\'on-Zygmund theory to prove a weak endpoint bound and interpolate with bounds for $p>1$.  But in order for $\{\Theta_t\}_{t>0}$ to be a Calder\'on-Zygmund operator, one must require a regularity condition in the first variable of $\theta_t$.  In this paper, we use almost orthogonality estimates and Carleson type bounds adapted to a weighted setting, and extend bounds to indeces $p<1$ by the weight extrapolation of Grafakos-Martell \cite{GM}.

We also prove a stronger result for square functions associated to a certain class of multiconvolution operators.  We prove necessary and sufficient cancellation conditions for bounds of $S$ when $\Theta_t$ is given by convolution for each $t>0$.  As a consequence, we also provide a classical Calder\'on-Zygmund type analogue for square functions:  If $\Theta_t$ is given by convolution for each $t$ and $S$ is bounded on $L^{p_0}$ for some $p_0\geq2$, then $S$ is bounded on all reasonable weighted Lebesgue spaces, including spaces with index smaller than one in the multilinear setting.  We state these results precisely in the following theorem.

\begin{theorem}\label{t:mconv}
Suppose $\theta_t(x,y_1,...,y_m)=t^{-mn}\Psi^t(t^{-1}(x-y_1),...,t^{-1}(x-y_1))$ satisfies \eqref{size} and \eqref{regy} for some collection of functions $\Psi^t:\R^{mn}\rightarrow\C$ depending on $t>0$.  Then the following are equivalent
\begin{itemize}
\item[\it i.]  $\Theta_t$ satisfies the Carleson condition
\item[\it ii.]  $S$ satisfies the unweighted version of \eqref{Lpbound} for some $1<p_1,...,p_m<\infty$ and $2\leq p<\infty$ that satisfy $\frac{1}{p}=\frac{1}{p_1}+\cdots+\frac{1}{p_m}$, that is \eqref{Lpbound} with $w_1=\cdots=w_m=w=1$
\item[\it iii.]  $S$ satisfies \eqref{Lpbound} for all $1<p_1,...,p_m<\infty$ that satisfy $\frac{1}{p}=\frac{1}{p_1}+\cdots+\frac{1}{p_m}$, $w_i^{p_i}\in A_{p_i}$ where $w=w_1\cdots w_m$, and $f_i\in L^{p_i}(w_i^{p_i})$.
\item[\it iv.]  $\Theta_t$ satisfies the strong Carleson condition
\end{itemize}
Furthermore, if $\Psi^t=\Psi$ is constant in $t$, then conditions {\it (i)}-{\it (iv)} are equivalent to $\Theta_t(1,...,1)=0$ as well.
\end{theorem}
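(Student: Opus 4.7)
The plan is to establish the cyclic chain of implications $(iv) \Rightarrow (iii) \Rightarrow (ii) \Rightarrow (i) \Rightarrow (iv)$, and then address the additional equivalence when $\Psi^t = \Psi$ is constant in $t$. The implication $(iv) \Rightarrow (iii)$ is an immediate application of Theorem \ref{t:main}: since $\theta_t$ satisfies \eqref{size}, \eqref{regy}, and the strong Carleson condition, the weighted bounds \eqref{Lpbound} follow directly. The implication $(iii) \Rightarrow (ii)$ is a trivial specialization: take $w_1 = \cdots = w_m = 1$ and choose any admissible exponents with $p \geq 2$ (e.g.\ $p_i = 2m$, yielding $p = 2$).

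For $(ii) \Rightarrow (i)$, I would test the assumed unweighted bound on $f_i = \chi_Q$ for an arbitrary cube $Q$, obtaining
\[
\|S(\chi_Q,\dots,\chi_Q)\|_{L^p}^p \less \prod_{i=1}^m |Q|^{p/p_i} = |Q|.
\]
Restricting the time integral defining $S$ to $0<t<\ell(Q)$ and the spatial integral to $x \in Q$, and then applying H\"older's inequality in $x$ with exponents $p/2$ and $(p/2)'$ (here $p \geq 2$ is essential) to pass from the $L^{p/2}$ norm of the inner integral to its $L^1$ norm, gives
\[
\int_Q \int_0^{\ell(Q)} |\Theta_t(\chi_Q,\dots,\chi_Q)(x)|^2 \frac{dt}{t}\, dx \less |Q|^{2/p} \cdot |Q|^{1-2/p} = |Q|,
\]
which, after taking a supremum over cubes, is exactly the Carleson condition.

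The main obstacle is $(i) \Rightarrow (iv)$, where the Carleson condition must be upgraded to the strictly stronger one. The key leverage is the multiconvolution structure, which provides translation invariance $\Theta_t(f_1(\cdot+h),\dots,f_m(\cdot+h))(x) = \Theta_t(f_1,\dots,f_m)(x+h)$ together with compatibility under $t$-dilations. Using these symmetries, along with the size and regularity estimates \eqref{size} and \eqref{regy}, one should be able to decompose a general test function adapted to $Q$ as a weighted superposition of translates and dilates of characteristic functions of cubes, and then use almost-orthogonality between the $\Theta_t$-images at distinct scales to absorb the cross terms into the assumed Carleson bound. I expect this step to be the most delicate, as one must carefully trade the regularity gain from \eqref{regy} against the summation over dyadic scales to control the strong Carleson integral.

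Finally, for the additional equivalence when $\Psi^t = \Psi$ is constant in $t$, the change of variables $u_i = t^{-1}(x - y_i)$ gives
\[
\Theta_t(1,\dots,1)(x) = \int_{\R^{mn}} \Psi(u_1,\dots,u_m)\, du_1\cdots du_m =: c,
\]
a constant independent of $x$ and $t$. If $c \neq 0$, then the size condition \eqref{size} implies $\Theta_t(\chi_Q,\dots,\chi_Q)(x) \to c$ as $t \to 0^+$ for $x$ strictly interior to $Q$, so the Carleson integral $\int_Q \int_0^{\ell(Q)} |\Theta_t(\chi_Q,\dots,\chi_Q)(x)|^2 \frac{dt}{t}\, dx$ diverges logarithmically, contradicting $(i)$. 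Hence $\Theta_t(1,\dots,1) \equiv 0$. Conversely, this vanishing combined with \eqref{size}, \eqref{regy}, and the convolution structure yields the strong Carleson condition via a standard almost-orthogonality argument for mean-zero kernels tested against bump functions adapted to $Q$, closing the circle.
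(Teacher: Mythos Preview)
Your handling of $(iv)\Rightarrow(iii)\Rightarrow(ii)$ is fine and matches the paper.  For $(ii)\Rightarrow(i)$ there is a small slip: the Carleson condition concerns $\Theta_t(1,\dots,1)$, not $\Theta_t(\chi_Q,\dots,\chi_Q)$, so the bound you derive is not ``exactly the Carleson condition.''  You need an extra step decomposing $1=\chi_{2Q}+\chi_{(2Q)^c}$ and using the size estimate \eqref{size} to control the off-diagonal pieces (this is Proposition \ref{p:boundedimpliesCarleson} in the paper); this is routine, but it is not what you wrote.

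The real gap is in $(i)\Rightarrow(iv)$.  You describe this as ``the main obstacle'' and propose a delicate almost-orthogonality scheme involving translates, dilates, and cross-term absorption---but none of that is needed, and your sketch does not actually lead to a proof.  The point you are missing is one you yourself exploit one paragraph later: by the change of variables $u_i=t^{-1}(x-y_i)$,
\[
\Theta_t(1,\dots,1)(x)=\int_{\R^{mn}}\Psi^t(u_1,\dots,u_m)\,d\vec u=:F(t)
\]
is \emph{independent of $x$} for every $t$ (this holds for general $\Psi^t$, not only when $\Psi^t=\Psi$).  Once $\Theta_t(1,\dots,1)(x)=F(t)$, the Carleson and strong Carleson quantities coincide trivially:
\[
\sup_{x\in Q}\int_0^{\ell(Q)}|F(t)|^2\frac{dt}{t}
=\frac{1}{|Q|}\int_Q\int_0^{\ell(Q)}|F(t)|^2\frac{dt}{t}\,dx
\leq\|\mu\|_{\mathcal C}.
\]
This is the entire content of the implication in the paper's proof; there is no decomposition or almost-orthogonality involved.

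Similarly, in your final paragraph the converse ``$\Theta_t(1,\dots,1)=0$ implies strong Carleson'' requires no almost-orthogonality argument: if $\Theta_t(1,\dots,1)\equiv 0$ then the strong Carleson integral is identically zero.  And for the forward direction, once you know $\Theta_t(1,\dots,1)=c$ constant, the divergence of $\int_0^{\ell(Q)}|c|^2\,dt/t$ for $c\neq 0$ already contradicts the Carleson condition directly; the detour through $\Theta_t(\chi_Q,\dots,\chi_Q)\to c$ is unnecessary.
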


We organize the article in the following way:  In Section 2, we prove the some convergence results and boundedness results for $S$ when $\Theta_t(1,...,1)=0$.  In Section 3, prove various properties relating the Carleson, strong Carleson, and two cube testing conditions to each other and some bounds for $S$.  Finally in section 4, we prove Theorems \ref{t:main} and \ref{t:mconv}.

\section{A Reduced T1 Theorem for Square Functions on Weighted Spaces}

It is well-known that \eqref{size} implies that $|\Theta_t(f_1,...,f_m)(x)|\less Mf_1(x)\cdots Mf_m(x)$, where $M$ is the Hardy-Littlewood maximal function, and hence
\begin{align*}
\sup_{t>0}||\Theta_t(f_1,...,f_m)||_{L^p}\less\prod_{i=1}^m||f_i||_{L^{p_i}}
\end{align*}
when $1<p_1,...,p_m<\infty$ satisfy the H\"older type relationship
\begin{align}
\frac{1}{p}=\sum_{i=1}^m\frac{1}{p_i}.\label{Holder}
\end{align}
So it is natural to expect that $p_1,...,p_m$ satisfy this relationship for square function bounds of the form \eqref{Lpbound}.  For the remainder of this work, we will assume that $1<p_1,...,p_m<\infty$ and $p$ is defined by \eqref{Holder}.

When we are in the linear setting, with a convolution operator $\theta_t(x,y)=\psi_t(x-y)=t^{-n}\psi(t^{-1}(x-y))$, we use the notation \eqref{firstsqfunction} to avoid confusion with the square function $S$, and to emphasize that we are using the known Littlewood-Paley theory.

\begin{definition}\label{d:Apweight}
Let $w$ be a non-negative locally integrable function.  For $p>1$ we say that $w$ is an $A_p=A_p(\R^n)$ weight, written $w\in A_p$, if
\begin{align*}
[w]_{A_p}=\sup_Q\(\frac{1}{|Q|}\int_Qw(x)dx\)\(\frac{1}{|Q|}\int_Qw(x)^{1-p'}dx\)^{p-1}<\infty
\end{align*}
where the supremum is taken over all cubes $Q\subset\R^n$ with side parallel to the coordinate axes.
\end{definition}

The following lemma states that approximation to the identity operators have essentially the same convergence properties in weighted $L^p$ spaces as unweighted.  This result is well-known (an explicit proof is available for example in the work of Wilson \cite{W}), but for the reader's convenience we state the results precisely and give a short proof.

\begin{lemma}\label{l:conv}
Let $P_tf=\varphi_t*f$ where $|\varphi(x)|\less\frac{1}{(1+|x|)^N}$ for some $N>n$ with $\widehat\varphi(0)=1$ and $w\in A_p$ for some $1<p<\infty$.
\begin{itemize}
\item[\it i.] If $f\in L^p(w)$, then $P_tf\rightarrow f$ in $L^p(w)$ as $t\rightarrow0$.

\item[\it ii.] If $f\in L^p(w)$ and there exists a $1\leq q<\infty$ such that $f\in L^q$, then $P_tf\rightarrow 0$ in $L^p(w)$ as $t\rightarrow\infty$.
\end{itemize}
\end{lemma}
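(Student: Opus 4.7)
The plan is to reduce both parts to the standard fact that $|P_tf(x)|\lesssim Mf(x)$ uniformly in $t$, combined with the boundedness of $M$ on $L^p(w)$ for $w\in A_p$, and to handle pointwise convergence by a density argument in (i) and a direct H\"older decay estimate in (ii).

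For part (i), I would first observe that since $|\varphi(x)|\lesssim(1+|x|)^{-N}$ with $N>n$, a standard computation gives the pointwise bound $|P_tf(x)|\lesssim Mf(x)$ for all $t>0$ and all locally integrable $f$. Because $w\in A_p$, the Hardy--Littlewood maximal operator is bounded on $L^p(w)$, so $\|P_t(f-g)\|_{L^p(w)}\lesssim\|f-g\|_{L^p(w)}$ uniformly in $t$. Next, I would use that $C_c(\R^n)$ is dense in $L^p(w)$ when $w\in A_p$ (this follows from local integrability of $w$ and a standard truncation/mollification argument). Fix $\varepsilon>0$ and choose $g\in C_c(\R^n)$ with $\|f-g\|_{L^p(w)}<\varepsilon$. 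For such $g$, the condition $\widehat{\varphi}(0)=1$ means $\int\varphi=1$, and the decay of $\varphi$ gives $P_tg\to g$ uniformly on $\R^n$ as $t\to 0$, while $P_tg$ and $g$ are supported (essentially) in a fixed compact set up to rapidly decaying tails; since the weight $w$ is locally integrable, uniform convergence on compact sets together with tail control (via the pointwise bound $|P_tg(x)|\lesssim Mg(x)\in L^p(w)$) and dominated convergence yield $\|P_tg-g\|_{L^p(w)}\to 0$. Combining with the uniform operator bound gives (i).

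For part (ii), the same pointwise bound $|P_tf(x)|\lesssim Mf(x)$ provides a dominating function in $L^p(w)$, since $Mf\in L^p(w)$ by the $A_p$ hypothesis. So it suffices to show that $P_tf(x)\to 0$ pointwise as $t\to\infty$, and then apply the dominated convergence theorem in $L^p(w)$. The pointwise convergence to zero follows from H\"older's inequality: writing $q'$ for the conjugate exponent of $q$,
\begin{equation*}
|P_tf(x)|\leq\|\varphi_t\|_{L^{q'}}\|f\|_{L^q}=t^{-n/q}\|\varphi\|_{L^{q'}}\|f\|_{L^q}\longrightarrow 0
\end{equation*}
as $t\to\infty$, using $1\leq q<\infty$ so that $n/q>0$ and $\|\varphi\|_{L^{q'}}<\infty$ by the decay hypothesis on $\varphi$. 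Thus $P_tf\to 0$ uniformly on $\R^n$, which is stronger than pointwise, and the dominated convergence argument closes part (ii).

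The only mildly delicate step is confirming density of $C_c$ in $L^p(w)$ and handling the tail of $P_tg-g$ in part (i); everything else is routine given the maximal function domination. I would cite the density result (or give a one-line justification using the fact that $w$ is locally integrable and finite a.e., so truncating $f$ by a large cube and mollifying stays within a prescribed $L^p(w)$ error).
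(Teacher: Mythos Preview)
Your proposal is correct. For part (ii) you do exactly what the paper does: the H\"older bound $|P_tf(x)|\leq\|\varphi_t\|_{L^{q'}}\|f\|_{L^q}\lesssim t^{-n/q}$ gives pointwise decay to zero, and dominated convergence in $L^p(w)$ with dominator $Mf$ finishes it. For part (i), however, your route differs from the paper's. The paper applies Minkowski's integral inequality to write
\[
\|P_tf-f\|_{L^p(w)}\leq\int_{\R^n}|\varphi(y)|\,\|f(\cdot-ty)-f(\cdot)\|_{L^p(w)}\,dy,
\]
and then invokes continuity of translation in $L^p(w)$ together with dominated convergence in the $y$ variable. Your argument instead runs a three-term density estimate: approximate $f$ by $g\in C_c$, control $\|P_t(f-g)\|_{L^p(w)}$ uniformly via the maximal bound $|P_th|\lesssim Mh$ and the $A_p$ boundedness of $M$, and handle $\|P_tg-g\|_{L^p(w)}$ by uniform convergence plus dominated convergence with dominator $Mg+|g|$. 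Your approach has the advantage of being entirely self-contained once the maximal function bound is in hand, whereas the paper's argument implicitly uses that translations act continuously on $L^p(w)$ for $w\in A_p$, a fact which itself is typically proved by a density argument of the same flavor as yours. Either way the substance is the same; your version just unpacks one layer further.
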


\begin{proof}
We first prove {\it (i)} by estimating
\begin{align*}
||P_tf-f||_{L^p(w)}&\leq\int_{\R^n}|\varphi(y)|\;||f(\cdot-ty)-f(\cdot)||_{L^p(w)}dy.
\end{align*}
The integrand $|\varphi(y)|\;||f(\cdot-ty)-f(\cdot)||_{L^p(w)}$ is controlled by $2||f||_{L^p(w)}|\varphi(y)|$ which is an integrable function.  So by dominated convergence
\begin{align*}
\lim_{t\rightarrow0}||P_tf-f||_{L^p(w)}&\leq\int_{\R^n}|\varphi(y)|\lim_{t\rightarrow0}||f(\cdot-ty)-f(\cdot)||_{L^p(w)}dy=0.
\end{align*}
Therefore {\it (i)} holds.  Now for {\it (ii)}, suppose that $f\in L^p(w)\cap L^q(\R^n)$ for some $1\leq q<\infty$.  Then it follows that for all $x\in\R^n$
\begin{align*}
|P_tf(x)|&\leq||\varphi_t||_{L^{q'}}||f||_{L^q}\\
&\less t^{-n/q}\(\int_{\R^n}\frac{dx}{(1+|x|)^{Nq'}}\)^{1/q'}||f||_{L^q}\\
&\less t^{-n/q}||f||_{L^q}
\end{align*}
which tends to $0$ as $t\rightarrow\infty$.  So $P_tf\rightarrow0$ a.e. in $\R^n$.   Furthermore $|P_tf(x)|\less Mf(x)$ where $Mf\in L^p(w)$ since $f\in L^p(w)$ and $1<p<\infty$.  Then by dominated convergence, we have
\begin{align*}
\lim_{t\rightarrow\infty}\int_{\R^n}|P_tf(x)|^pw(x)dx&=\int_{\R^n}\lim_{t\rightarrow\infty}|P_tf(x)|^pw(x)dx=0.
\end{align*}
So we have $P_tf\rightarrow0$ in $L^p(w)$ as $t\rightarrow\infty$.
\end{proof}

\begin{lemma}\label{l:decomp}
Suppose $\theta_t$ satisfies \eqref{size}, $P_tf=\varphi_t*f$ where $\varphi\in C_0^\infty$ with $\widehat\varphi(0)=1$, $w_i^{p_i}\in A_{p_i}$ for $1<p,p_1,...,p_m<\infty$ satisfying \eqref{Holder}.  Define $w=w_1\cdots w_m$.  Then for $f_i\in L^{p_i}(w_i^{p_i})\cap L^{p_i}$
\begin{align}
\Theta_t(f_1,...,f_m)=\sum_{j=1}^m\int_0^\infty\Theta_t\Pi_{j,s}(f_1,...,f_m)\frac{ds}{s}\label{decomp}
\end{align}
where the convergence holds in $L^p(w^p)$ and for $j=1,...,m$, $\Pi_{j,s}$ is defined by
\begin{align*}
&\Pi_{j,t}(f_1,...,f_m)=P_t^2f_1\otimes\cdots\otimes P_t^2f_{j-1}\otimes Q_tf_j\otimes P_t^2f_{j+1}\otimes\cdots\otimes P_t^2f_m,
\end{align*}
$Q_tf=\psi_t*f$, and $\psi_t=-t\frac{d}{dt}(\varphi_t*\varphi_t)$.  Furthermore there exist $Q_t^{i,k}f=\psi_t^{i,k}*f$ where $\psi^{i,k}\in C_0^\infty$ have mean zero for $i=1,2$ and $k=1,...,n$ and
\begin{align*}
Q_t=\sum_{k=1}^nQ_t^{1,k}Q_t^{2,k}.
\end{align*}
\end{lemma}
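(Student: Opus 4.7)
The plan is to establish \eqref{decomp} as the multilinear version of the Calder\'on reproducing formula built on the identity $-s\frac{d}{ds}P_s^2 = Q_s$. Applying the product rule in $s$ to the tensor product $\prod_{i=1}^m P_s^2 f_i(x_i)$ yields
\begin{align*}
-s\frac{d}{ds}\prod_{i=1}^m P_s^2 f_i(x_i) = \sum_{j=1}^m \Pi_{j,s}(f_1,\ldots,f_m)(x_1,\ldots,x_m),
\end{align*}
and integrating along $\frac{ds}{s}$ from $\varepsilon$ to $T$ produces the telescoping identity
\begin{align*}
\prod_{i=1}^m P_\varepsilon^2 f_i - \prod_{i=1}^m P_T^2 f_i = \sum_{j=1}^m \int_\varepsilon^T \Pi_{j,s}(f_1,\ldots,f_m)\,\frac{ds}{s}.
\end{align*}

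Next I apply $\Theta_t$ to both sides and interchange with the $\frac{ds}{s}$-integral via Fubini, justified by \eqref{size} together with the uniform-in-$s$ pointwise bound $|\Pi_{j,s}(f_1,\ldots,f_m)(y_1,\ldots,y_m)| \less \prod_{i=1}^m Mf_i(y_i)$ coming from the standard approximate-identity estimates for $P_s$ and $Q_s$. The resulting identity is
\begin{align*}
\Theta_t(P_\varepsilon^2 f_1,\ldots,P_\varepsilon^2 f_m) - \Theta_t(P_T^2 f_1,\ldots,P_T^2 f_m) = \sum_{j=1}^m \int_\varepsilon^T \Theta_t\Pi_{j,s}(f_1,\ldots,f_m)\,\frac{ds}{s}.
\end{align*}
The pointwise bound $|\Theta_t(g_1,\ldots,g_m)(x)| \less \prod_{i=1}^m Mg_i(x)$, which follows from \eqref{size}, combined with the weighted boundedness of $M$ on $L^{p_i}(w_i^{p_i})$ (since $w_i^{p_i}\in A_{p_i}$) and multilinear H\"older with $\frac{1}{p}=\sum_{i=1}^m\frac{1}{p_i}$, makes $\Theta_t:\prod_i L^{p_i}(w_i^{p_i})\to L^p(w^p)$ continuous with norm uniform in $t$. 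Invoking Lemma \ref{l:conv}(i),(ii), which give $P_\varepsilon^2 f_i\to f_i$ and $P_T^2 f_i\to 0$ in $L^{p_i}(w_i^{p_i})$ (the latter using $f_i\in L^{p_i}$), the left side converges to $\Theta_t(f_1,\ldots,f_m)$ in $L^p(w^p)$, proving \eqref{decomp}.

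For the factorization of $Q_t$, the product rule yields $\psi_t = -t\frac{d}{dt}(\varphi_t*\varphi_t) = 2\zeta_t*\varphi_t$, where $\zeta_t = -t\frac{d\varphi_t}{dt}$. Direct differentiation of $\varphi_t(y) = t^{-n}\varphi(t^{-1}y)$ gives $\zeta(y) = n\varphi(y) + y\cdot\nabla\varphi(y) = \mathrm{div}(y\varphi(y)) = \sum_{k=1}^n \partial_k(y_k\varphi(y))$, and $\int\zeta = 0$ because $\int\varphi_t\,dx = 1$ is independent of $t$. Substituting into $\psi = 2\zeta*\varphi$ and shifting one derivative onto the second factor produces $\psi = 2\sum_{k=1}^n (y_k\varphi)*(\partial_k\varphi)$. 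Setting $\psi^{1,k}(y) = 2y_k\varphi(y)$ and $\psi^{2,k}(y) = \partial_k\varphi(y)$ puts both factors in $C_0^\infty$; $\psi^{2,k}$ is automatically mean zero as the derivative of a compactly supported function, and $\int\psi^{1,k}\,dy = 0$ whenever $\varphi$ is even. Since only $\varphi\in C_0^\infty$ with $\widehat\varphi(0)=1$ is required, I choose $\varphi$ radial, forcing both factors to have mean zero.

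The main technical obstacle is the simultaneous Fubini interchange and limit passage in the weighted space $L^p(w^p)$; once the multilinear continuity of $\Theta_t$ on weighted spaces (via pointwise domination by products of Hardy--Littlewood maximal functions and the $A_{p_i}$ condition) is in hand, both reduce to coordinate-wise applications of Lemma \ref{l:conv}.
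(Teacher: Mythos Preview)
Your proof is correct and follows essentially the same route as the paper's: both use the telescoping identity $-s\frac{d}{ds}\prod_i P_s^2 f_i = \sum_j \Pi_{j,s}(f_1,\ldots,f_m)$, apply $\Theta_t$, and pass to the limit in $L^p(w^p)$ via Lemma~\ref{l:conv} together with the pointwise domination $|\Theta_t(g_1,\ldots,g_m)|\lesssim\prod_i Mg_i$; the paper's explicit factorization $\psi^{1,k}=-2\partial_k\varphi$, $\psi^{2,k}=x_k\varphi$ is the same as yours with the roles of the two factors swapped. Your remark that the mean-zero property of $y_k\varphi$ requires $\varphi$ to be even (which you secure by taking $\varphi$ radial) is a point the paper leaves implicit.
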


\begin{proof}
We note that since $f_i\in L^{p_i}(w_i^{p_i})\cap L^{p_i}$, by Lemma \ref{l:conv} $P_t^2f_i\rightarrow f_i$ as $t\rightarrow0$ and $P_t^2f_i\rightarrow0$ as $t\rightarrow\infty$ in $L^{p_i}(w_i^{p_i})$.  Then it follows that
\begin{align*}
&\left|\left|\Theta_t(f_1,...,f_m)-\sum_{j=1}^m\int_\epsilon^{1/\epsilon}\Theta_t\Pi_{j,s}(f_1,...,f_m)\frac{ds}{s}\right|\right|_{L^p(w^p)}\\
&\hspace{.5cm}=\left|\left|\Theta_t(f_1,...,f_m)+\int_\epsilon^{1/\epsilon}s\frac{d}{ds}\Theta_t(P_s^2 f_1,...,P_s^2f_m)\frac{ds}{s}\right|\right|_{L^p(w^p)}\\
&\hspace{.5cm}\leq\left|\left|\Theta_t(f_1,...,f_m)-\Theta_t(P_\epsilon^2 f_1,...,P_\epsilon^2f_m)\right|\right|_{L^p}+||\Theta_t(P_{1/\epsilon}^2 f_1,...,P_{1/\epsilon}^2f_m)||_{L^p(w^p)}\\
&\hspace{.5cm}\leq\sum_{j=1}^m\left|\left|\Theta_t(P_\epsilon^2 f_1,...,P_\epsilon f_{j-1},f_j-P_\epsilon^2 f_j,f_{j+1},...,f_m)\right|\right|_{L^p(w^p)}+||\Theta_t(P_{1/\epsilon}^2 f_1,...,P_{1/\epsilon}^2f_m)||_{L^p(w^p)}\\
&\hspace{.5cm}\less\sum_{j=1}^m\left|\left|Mf_1\cdots Mf_{j-1}(f_j-P_\epsilon f_j)f_{j+1}\cdots f_m\right|\right|_{L^p(w^p)}+||MP_{1/\epsilon}^2 f_1\cdots MP_{1/\epsilon}^2f_m||_{L^p(w^p)}\\
&\hspace{.5cm}\less\sum_{j=1}^m||f_j-P_\epsilon f_j||_{L^{p_j}(w_j^{p_j})}\prod_{i\neq j}||f_i||_{L^{p_i}(w_i^{p_i})}+\prod_{i=1}^m||P_{1/\epsilon}^2 f_i||_{L^{p_i}(w_i^{p_i})}.
\end{align*}
As $\epsilon\rightarrow0$, the above expression tends to zero.  Therefore we have \eqref{decomp} where the convergence is in $L^p(w^p)$.  One can verify that $\psi^{1,k}(x)=-2\partial_{x_k}\varphi(x)$ and $\psi^{2,k}(x)=x_k\varphi(x)$ satisfy the conditions given above.  For details, this decomposition of $Q_t$ was done in the linear one dimensional case by Coifman-Meyer in \cite{CM} and in the $n$ dimensional case by Grafakos in \cite{G}.
\end{proof}

\begin{lemma}\label{l:almostorth}
Let $P_t$, $Q_t$, $Q_t^{i,j}$, $\Pi_{j,s}$ be as in Lemma \ref{decomp}.  Then for all $f_i\in L^{p_i}(w_i^{p_i})\cap L^\infty_c$, $s>0$, $j=1,...,m$ and $x\in\R^n$
\begin{align*}
|\Theta_t\Pi_{j,s}(f_1,...,f_m)(x)|\less\(\frac{s}{t}\wedge\frac{t}{s}\)^{\gamma\,'}\sum_{k=1}^nMQ_s^{2,k}f_j(x)\prod_{i\neq j}Mf_i(x)
\end{align*}
for some $0<\gamma\,'\leq\gamma$ where $u\wedge v=\min(u,v)$ for $u,v>0$.
\end{lemma}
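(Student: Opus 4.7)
The plan is to split on the sign of $\log(s/t)$ and use fundamentally different cancellation mechanisms in each regime, invoking the standing assumption $\Theta_t(1,\ldots,1) = 0$ of Section 2 (without this hypothesis the factor $(t/s)^{\gamma'}$ already fails in the linear convolution case $\theta_t(x,y) = \varphi_t(x-y)$ with $\widehat\varphi(0) = 1$).

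In the case $s \leq t$, I write $Q_s f_j = \sum_k Q_s^{1,k} h_k$ with $h_k = Q_s^{2,k} f_j$ and use Fubini to pair $\psi^{1,k}_s$ with $\theta_t$ in the $y_j$ slot. Because $\psi^{1,k}$ has mean zero, I may substitute $\theta_t(x, y_1, \ldots, y_j, \ldots, y_m)$ by the difference $\theta_t(x, \ldots, y_j, \ldots) - \theta_t(x, \ldots, z, \ldots)$ against $\psi^{1,k}_s(y_j - z)$. Geometric interpolation between \eqref{size} and \eqref{regy} in the $j$-th variable yields, for suitable $\gamma' < \gamma$ and $N' > n$, an estimate of the form
\begin{align*}
|\theta_t(x, \ldots, y_j, \ldots) - \theta_t(x, \ldots, z, \ldots)| \less t^{-mn}(t^{-1}|y_j-z|)^{\gamma'} \prod_{i \neq j} (1 + t^{-1}|x - y_i|)^{-N'},
\end{align*}
together with the analogous decay in $|x-y_j|$ or $|x - z|$. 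The compact support of $\psi^{1,k}$ forces $|y_j - z| \less s$, producing the prefactor $(s/t)^{\gamma'}$, and standard majorization by approximate identities converts the remaining integrations into $M f_i(x)$ and $M Q_s^{2,k} f_j(x)$.

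In the case $s > t$, a direct mean-zero/regularity argument against $\theta_t$ would yield $(s/t)^{\gamma'}$ growth rather than decay, so I instead exploit $\Theta_t(1, \ldots, 1)(x) = 0$. Setting $F(\vec y) = \prod_{i \neq j} P_s^2 f_i(y_i) \cdot Q_s f_j(y_j)$, the identity $\int \theta_t(x, \vec y)\, d\vec y = 0$ gives
\begin{align*}
\Theta_t \Pi_{j,s}(f)(x) = \int \theta_t(x, \vec y)\, [F(\vec y) - F(x, \ldots, x)]\, d\vec y.
\end{align*}
I telescope $F(\vec y) - F(x, \ldots, x)$ into a sum over $\ell \in \{1, \ldots, m\}$ of products in which the $\ell$-th factor is a difference and the remaining factors are pointwise controlled by maximal functions. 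For $\ell \neq j$, Lipschitz regularity of $\varphi_s^{(2)}$ gives $|P_s^2 f_\ell(y_\ell) - P_s^2 f_\ell(x)| \less (|y_\ell - x|/s)\, M f_\ell(x)$; for $\ell = j$, expanding $Q_s f_j = \sum_k \psi^{1,k}_s * h_k$ and using the Lipschitz regularity of $\psi^{1,k}_s$ gives $|Q_s f_j(y_j) - Q_s f_j(x)| \less (|y_j - x|/s) \sum_k M Q_s^{2,k} f_j(x)$. The size bound \eqref{size} effectively localizes $|y_\ell - x|$ to scale $t$, producing the factor $t/s$; interpolation with the trivial size-only bound gives the exponent $\gamma'$.

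The main obstacle is the $s > t$ regime, where the natural mean-zero/regularity argument points in the wrong direction. The key insight is that $\Theta_t(1, \ldots, 1) = 0$ permits the subtraction of the centered value $F(x, \ldots, x)$, after which the scales cooperate through the smoothness of the test functions $P_s^2 f_i$ and $Q_s f_j$ at the larger scale $s$ and the localization of $\theta_t$ at scale $t$. A secondary routine point is the geometric interpolation used in both cases to balance the H\"older exponent $\gamma$ of \eqref{regy} against sufficient decay of $\theta_t$ in the remaining variables.
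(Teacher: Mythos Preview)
Your proposal is correct and follows essentially the same strategy as the paper: for $s\le t$ you exploit the mean zero of $\psi^{1,k}_s$ against the $y_j$-regularity of $\theta_t$ after a geometric interpolation of \eqref{size} and \eqref{regy}, and for $s>t$ you invoke $\Theta_t(1,\ldots,1)=0$ to subtract the centered value and then telescope using the smoothness of the scale-$s$ convolutions. The only cosmetic difference is that the paper carries out the $s>t$ step at the kernel level (estimating $\int\theta_t(x,\vec u)\lambda_s(\vec u-\vec y)\,d\vec u$ and then applying it to $(Q_s^{2,k}f_j,P_sf_i)$), whereas you work directly at the function level with $F(\vec y)$; both lead to the same maximal-function bounds.
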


This lemma is a pointwise result that was proved in the discrete bilinear setting in \cite{Hart1}.  We make the appropriate modifications here to prove this multilinear continuous version.

\begin{proof}
For this proof, we define for $M,t>0$ and $x\in\R^n$
\begin{align}
\Phi_t^M(x)=\frac{t^{-n}}{(1+t^{-1}|x|)^M}.\label{bigphi}
\end{align}
It follows immediately that $\Phi_t^{M+d}\leq\Phi_t^M$ for any $d\geq0$, and there is a well known almost orthogonality result, for any $M,L>n$ and $s,t>0$
\begin{align}
\int_{\R^n}\Phi_t^M(x-u)\Phi_s^L(u-y)du\less\Phi_s^{M\wedge L}(x-y)+\Phi_t^{M\wedge L}(x-y).\label{aoestimate}
\end{align}
Note also that if we take $\eta=\frac{N-n}{2(N+\gamma)}$, $\gamma\,'=\eta\gamma$, and $N'=(1-\eta)N-\gamma\,'$, then using a geometric mean with weights $1-\eta$ and $\eta$ of estimates \eqref{size} and \eqref{regy} it follows that
\begin{align*}
|\theta_t(x,y_1,...,y_m)-\theta_t(x,y_1',y_2,...,y_m)|&\less t^{-\eta mn}(t^{-1}|y_1-y_1'|)^{\eta\gamma}\(\prod_{j=2}^m\Phi_t^N(x-y_j)\)^{1-\eta}\\
&\hspace{2.25cm}\times\(\Phi_t^N(x-y_1)+\Phi_t^N(x-y_1')\)^{1-\eta}\\
&\hspace{-3cm}=(t^{-1}|y_1-y_1'|)^{\gamma\,'}\(\Phi_t^{N'+\gamma\,'}(x-y_1)+\Phi_t^{N'+\gamma\,'}(x-y_1')\)\prod_{j=2}^m\Phi_t^{N'+\gamma\,'}(x-y_j)
\end{align*}
It is a direct computation to show that $0<\gamma\,'=\gamma\frac{N-n}{2(N+\gamma)}<\gamma$ and $n<N'=\frac{N+n}{2}\leq N-\gamma'$.  We will first look at the kernel of $\Theta_t(Q_s^{1,k}\,\cdot,P_s\;\cdot,...,P_s\;\cdot)$ for $k=1,...,m$, which is
\begin{align*}
\sum_{k=1}^n\int_{\R^{mn}}\theta_t(x,u_1,...,u_m)\psi_s^{1,k}(u_1-y_1)\prod_{i=2}^m\varphi_s(u_i-y_i)d\vec u.
\end{align*}
The goal here is to bound this kernel by a product of $\Phi_s^{N'}(x-y_j)+\Phi_t^{N'}(x-y_j)$.  So in the following computations, whenever possible we pull out terms of the form $\Phi_s^{N'}(x-y_j)$.  There will also appear terms of the form $\Phi_t^{N'}(x-u_j)$ and $\Phi_s^{N'}(u-y_j)$, for which we will use \eqref{aoestimate} and bound by appropriate functions $\Phi$ depending on $s$, $t$, and $x-y_j$.  We estimate the kernel for a fixed $k=1,...,m$ and simplify notation 
\begin{align*}
\lambda_s(y_1,...,y_m)=\psi_s^{1,k}(y_1)\prod_{i=2}^m\varphi_s(y_i).
\end{align*}
Then for $s<t$, it follows using that $\lambda_s(y_1,...,y_m)$ has mean zero in $y_1$ (since $\psi_s^{1,k}$ has mean zero), $\psi^{1,k},\varphi\in C_0^\infty$, and $\theta_t$ satisfies \eqref{size} and \eqref{regy} that
\begin{align}
&\left|\int_{\R^{mn}}\theta_t(x,u_1,...,u_m)\lambda_s(u_1-y_1,...,u_m-y_m)d\vec u\right|\notag\\
&\hspace{.5cm}\less\int_{\R^{mn}}|\theta_t(x,u_1,...,u_m)-\theta_t(x,y_1,u_2,...,u_m)|\(\prod_{j=1}^m\Phi_s^{N'+\gamma\,'}(u_j-y_j)\)d\vec u\notag\\
&\hspace{.5cm}\less\int_{\R^{mn}}(t^{-1}|u_1-y_1|)^{\gamma\,'}\Phi_t^{N'+\gamma\,'}(x-y_1)\Phi_s^{N'+\gamma\,'}(u_1-y_1)\notag\\
&\hspace{4.5cm}\times\prod_{j=2}^m\(\Phi_t^{N'+\gamma\,'}(x-u_j)\Phi_s^{N'+\gamma\,'}(u_j-y_j)\)d\vec u\notag\\
&\hspace{2cm}+\int_{\R^{mn}}(t^{-1}|u_1-y_1|)^{\gamma\,'}\prod_{j=1}^m\(\Phi_t^{N'+\gamma\,'}(x-u_j)\Phi_s^{N'+\gamma\,'}(u_j-y_j)\)d\vec u\notag\\
&\hspace{.5cm}\leq\frac{s^{\gamma\,'}}{t^{\gamma\,'}}\Phi_t^{N'+\gamma\,'}(x-y_1)\int_{\R^{mn}}\Phi_s^{N'}(u_1-y_1)\prod_{j=2}^m\(\Phi_t^{N'+\gamma\,'}(x-u_j)\Phi_s^{N'+\gamma\,'}(u_j-y_j)\)d\vec u\notag\\
&\hspace{2cm}+\frac{s^{\gamma\,'}}{t^{\gamma\,'}}\int_{\R^{mn}}\prod_{j=1}^m\(\Phi_t^{N'+\gamma\,'}(x-u_j)\Phi_s^{N'}(u_j-y_j)\)d\vec u\notag\\
&\hspace{.5cm}\less\frac{s^{\gamma\,'}}{t^{\gamma\,'}}\prod_{j=1}^m\(\Phi_s^{N'}(x-y_j)+\Phi_t^{N'}(x-y_j)\).\label{est1}
\end{align}
Note that we use the computation $(t^{-1}|u_1-y_1|)^{\gamma\,'}\Phi_s^{N'+\gamma\,'}(u_1-y_1)\leq\frac{s^{\gamma\,'}}{t^{\gamma\,'}}\Phi_s^{N'}(u_1-y_1)$.  Now for $s>t$, we use the assumptions $\Theta_t(1,...,1)=0$, $\theta_t$ satisfies \eqref{size}, and that $\psi_s^{1,k},\varphi_s\in C_0^\infty$ for the following estimate
\begin{align}
&\left|\int_{\R^{mn}}\theta_t(x,u_1,...,u_m)\lambda_s(u_1-y_1,...,u_m-y_m)d\vec u\right|\label{kerest}\\
&\hspace{.5cm}\less\int_{\R^{mn}}\prod_{j=1}^m\Phi_t^{N'+\gamma\,'}(x-u_j)\left|\lambda_s(u_1-y_1,...,u_m-y_m)-\lambda_s(x-y_1,...,x-y_m)\right|d\vec u\notag
\end{align}
Next we work to control the second term in the integrand on the right hand side of \eqref{kerest}.  Adding and subtracting successive terms, we get
\begin{align*}
&\left|\lambda_s(u_1-y_1,...,u_m-y_m)-\lambda_s(x-y_1,...,x-y_m)\right|\\
&\hspace{.5cm}\leq\sum_{\ell=1}^m|\lambda_s(u_1-y_1,...,u_{\ell-1}-y_{\ell-1},x-y_\ell,...,x-y_m)\\
&\hspace{5cm}-\lambda_s(u_1-y_1,...,u_\ell-y_\ell,x-y_{\ell+1},...,x-y_m)|\\
&\hspace{.5cm}\less\sum_{\ell=1}^m(s^{-1}|x-u_\ell|)^{\gamma'}\(\prod_{r=1}^{\ell-1}\Phi_s^{N'+\gamma'}(u_r-y_r)\)\(\Phi_s^{N'+\gamma'}(u_\ell-y_\ell)+\Phi_s^{N'+\gamma'}(x-y_\ell)\)\\
&\hspace{8.5cm}\times\(\prod_{r=\ell+1}^m\Phi_s^{N'+\gamma'}(x-y_r)\)\\
\end{align*}
Here we use the convection that $\prod_{j=1}^0A_j=\prod_{j=m+1}^mA_j=1$ to simplify notation.  Then \eqref{kerest} is bounded by
\begin{align}
&\sum_{\ell=1}^m\int_{\R^{mn}}\(\prod_{j=1}^m\Phi_t^{N'+\gamma'}(x-u_j)\)(s^{-1}|x-u_\ell|)^{\gamma'}\(\prod_{r=1}^{\ell-1}\Phi_s^{N'+\gamma'}(u_r-y_r)\)\notag\\
&\hspace{3cm}\times\(\Phi_s^{N'+\gamma'}(u_\ell-y_\ell)+\Phi_s^{N'+\gamma'}(x-y_\ell)\)\(\prod_{r=\ell+1}^m\Phi_s^{N'+\gamma'}(x-y_r)\)d\vec u\notag\\
&\hspace{.5cm}\leq\frac{t^{\gamma'}}{s^{\gamma'}}\sum_{\ell=1}^m\int_{\R^{mn}}\(\prod_{j=1}^m\Phi_t^{N'}(x-u_j)\)\(\prod_{r=1}^{\ell-1}\Phi_s^{N'+\gamma'}(u_r-y_r)\)\notag\\
&\hspace{3cm}\times\(\Phi_s^{N'+\gamma'}(u_\ell-y_\ell)+\Phi_s^{N'+\gamma'}(x-y_\ell)\)\(\prod_{r=\ell+1}^m\Phi_s^{N'+\gamma'}(x-y_r)\)d\vec u\notag\\
&\hspace{.5cm}\leq\frac{t^{\gamma'}}{s^{\gamma'}}\sum_{\ell=1}^m\(\prod_{r=1}^{\ell-1}\int_{\R^n}\Phi_t^{N'}(x-u_r)\Phi_s^{N'+\gamma'}(u_r-y_r)du_r\)\notag\\
&\hspace{2.5cm}\times\(\int_{\R^n}\Phi_t^{N'}(x-u_\ell)\(\Phi_s^{N'+\gamma'}(u_\ell-y_\ell)+\Phi_s^{N'+\gamma'}(x-y_\ell)\)du_\ell\)\notag\\
&\hspace{4.75cm}\times\(\prod_{r=\ell+1}^m\int_{\R^n}\Phi_t^{N'}(x-u_r)\Phi_s^{N'+\gamma'}(x-y_r)du_r\)\notag\\
&\hspace{.5cm}\leq\frac{t^{\gamma'}}{s^{\gamma'}}\prod_{r=1}^m\(\Phi_t^{N'}(x-y_r)+\Phi_t^{N'}(x-y_r)\).\label{est2}
\end{align}
Then using \eqref{est1} and \eqref{est2}, it follows that
\begin{align*}
&\left|\int_{\R^{mn}}\theta_t(x,u_1,...,u_m)\psi_s^{1,k}(u_1-y_1)\prod_{i=2}^m\varphi_s(u_i-y_i)d\vec u\right|\\
&\hspace{5cm}\less\(\frac{s}{t}\wedge\frac{t}{s}\)^{\gamma\,'}\prod_{j=1}^m\(\Phi_s^{N'}(x-y_j)+\Phi_t^{N'}(x-y_j)\).
\end{align*}
Then since $|\Phi_t^{N'}*f(x)|\less Mf(x)$ uniformly in $t$ and $\Theta_t\Pi_{s,1}=\sum_{k=1}^n\Theta(Q_s^{1,k}Q_s^{2,k},P_s^2,...,P_s^2)$, it follows that
\begin{align*}
|\Theta_t\Pi_{s,1}(f_1,...,f_m)(x)|\less\(\frac{s}{t}\wedge\frac{t}{s}\)^{\gamma\,'}\sum_{k=1}^nMQ_s^{2,k}f_1(x)\prod_{j=2}^mMf_j(x).
\end{align*}
By symmetry, this completes the proof.
\end{proof}

Next we work to set the square function results of \cite{Hart1}, \cite{GO} and \cite{GLMY} in weighted Lebesgue spaces.  This is a type of reduced T(1) Theorem for $L^2(\R_+,\frac{dt}{t})$-valued singular integral operators, where we assume that $\Theta_t(1,...,1)=0$ for all $t>0$.  We now state and prove a reduced T(1) Theorem for square functions on weighted spaces.

\begin{theorem}\label{t:weightedT1}
Let $\Theta_t$ and $S$ be defined as in \eqref{theta} and \eqref{sqfunction} where $\theta_t$ satisfies \eqref{size} and \eqref{regy}.  If $\Theta_t(1,...,1)=0$ for all $t>0$, then $S$ satisfies \eqref{Lpbound} for all $w_i^{p_i}\in A_{p_i}$, $1<p,p_1,...,p_m<\infty$ satisfying \eqref{Holder}, where $w=\prod_{i=1}^mw_i$, and $f_i\in L^{p_i}(w_i^{p_i})\cap L^{p_i}$.  Furthermore, the constant for this bound is at most a constant independent of $w_1,...,w_m$ times
\begin{align*}
\prod_{i=1}^m\(1+[w_j^{p_j}]_{A_{p_j}}^{\max(1,p_j'/p_j)+\max(\frac{1}{2},p_j'/p_j)}\).
\end{align*}
\end{theorem}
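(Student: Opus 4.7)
My plan is to combine the Calder\'on reproducing formula from Lemma \ref{l:decomp} with the almost orthogonality estimate from Lemma \ref{l:almostorth} to reduce the square function $S(f_1,\ldots,f_m)$ to a pointwise product of standard weighted Littlewood--Paley square functions and Hardy--Littlewood maximal functions, and then apply Muckenhoupt's theorem, the Fefferman--Stein vector-valued maximal inequality, and Kurtz's weighted Littlewood--Paley estimate to conclude.

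\textbf{Step 1: Pointwise reduction.} Fix $f_i\in L^{p_i}(w_i^{p_i})\cap L^{p_i}$ and set $G_{j,s}(x)=\sum_{k=1}^n MQ_s^{2,k}f_j(x)\prod_{i\neq j}Mf_i(x)$. By Lemma \ref{l:decomp},
$$\Theta_t(f_1,\ldots,f_m)(x)=\sum_{j=1}^m\int_0^\infty \Theta_t\Pi_{j,s}(f_1,\ldots,f_m)(x)\,\frac{ds}{s}$$
in $L^p(w^p)$, and by passing to a subsequence one obtains a pointwise a.e.\ version after suitable truncation in $s$. Apply Cauchy--Schwarz in $s$ with weight $(s/t\wedge t/s)^{\gamma\,'}$, using that $\int_0^\infty(s/t\wedge t/s)^{\gamma\,'}\,ds/s$ is a finite constant $C_{\gamma\,'}$ independent of $t$ by scaling, to obtain
$$\Bigl|\int_0^\infty\Theta_t\Pi_{j,s}(f)(x)\,\tfrac{ds}{s}\Bigr|^2\lesssim\int_0^\infty(s/t\wedge t/s)^{-\gamma\,'}|\Theta_t\Pi_{j,s}(f)(x)|^2\,\tfrac{ds}{s}.$$
Now apply Lemma \ref{l:almostorth} to bound $|\Theta_t\Pi_{j,s}(f)(x)|^2\lesssim(s/t\wedge t/s)^{2\gamma\,'}G_{j,s}(x)^2$, integrate in $t$ against $dt/t$, and use Fubini together with $\int_0^\infty(s/t\wedge t/s)^{\gamma\,'}\,dt/t=C_{\gamma\,'}$ to arrive at the pointwise estimate
$$S(f_1,\ldots,f_m)(x)^2\lesssim \sum_{j=1}^m\sum_{k=1}^n\Bigl(\int_0^\infty|MQ_s^{2,k}f_j(x)|^2\,\tfrac{ds}{s}\Bigr)\prod_{i\neq j}Mf_i(x)^2.$$

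\textbf{Step 2: Weighted estimates.} Taking $L^p(w^p)$ norms and applying H\"older's inequality with exponents $p_1,\ldots,p_m$ (noting $w^p=\prod w_i^{p_i}$ with appropriate weight assignment to each factor) reduces the problem to estimating, for each $j$, the quantity $\|(\int_0^\infty|MQ_s^{2,k}f_j|^2\,ds/s)^{1/2}\|_{L^{p_j}(w_j^{p_j})}$ and $\|Mf_i\|_{L^{p_i}(w_i^{p_i})}$ for $i\neq j$. The latter is controlled by Muckenhoupt's weighted maximal theorem with constant $[w_i^{p_i}]_{A_{p_i}}^{\max(1,p_i'/p_i)}$. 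For the former, apply the Fefferman--Stein vector-valued maximal inequality (valid for $w_j^{p_j}\in A_{p_j}$) with constant $[w_j^{p_j}]_{A_{p_j}}^{\max(1,p_j'/p_j)}$ to remove the inner $M$, then invoke Kurtz's weighted Littlewood--Paley theorem for the mean-zero smooth $\psi^{2,k}$, which yields $\|g_{\psi^{2,k}}(f_j)\|_{L^{p_j}(w_j^{p_j})}\lesssim[w_j^{p_j}]_{A_{p_j}}^{\max(1/2,p_j'/p_j)}\|f_j\|_{L^{p_j}(w_j^{p_j})}$. Multiplying these constants produces the exponent $\max(1,p_j'/p_j)+\max(1/2,p_j'/p_j)$ claimed in the theorem.

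\textbf{Main obstacle.} The technical subtlety I expect to wrestle with is justifying that the pointwise Cauchy--Schwarz/almost orthogonality chain is legitimate, since Lemma \ref{l:decomp} only supplies convergence of the $s$-integral in $L^p(w^p)$, not pointwise. I would address this by working with the truncated integral $\int_\epsilon^{1/\epsilon}\Theta_t\Pi_{j,s}\,ds/s$ (for which all manipulations are justified by Fubini and the absolute convergence guaranteed by Lemma \ref{l:almostorth}), establishing the pointwise inequality at each truncation level with constants independent of $\epsilon$, and then using the subsequential a.e.\ convergence provided by the $L^p(w^p)$ limit to pass to $\epsilon\to 0$. Once this bookkeeping is done, the remainder is a clean application of well-known weighted tools whose sharp quantitative forms produce the stated dependence on $[w_j^{p_j}]_{A_{p_j}}$.
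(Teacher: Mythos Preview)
Your proposal is correct and follows essentially the same strategy as the paper: decompose via Lemma \ref{l:decomp}, apply Cauchy--Schwarz in $s$ with the weight $(s/t\wedge t/s)^{\gamma'}$, invoke the almost orthogonality Lemma \ref{l:almostorth}, and finish with H\"older, the weighted Fefferman--Stein vector-valued maximal inequality, and the weighted Littlewood--Paley bound. The only cosmetic difference is that the paper carries out the Cauchy--Schwarz step via a duality pairing with a test function $h_t\in L^{p'}(w^p;L^2(dt/t))$ rather than pointwise, which absorbs the $L^p(w^p)$-convergence issue you flag in your final paragraph directly into the pairing; your attribution of the exponents $\max(1,p_j'/p_j)$ and $\max(\tfrac12,p_j'/p_j)$ to the two weighted tools is swapped relative to the paper, but the sum (and hence the stated constant) is of course the same.
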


\begin{proof}
Let $P_t$, $Q_t$, etc. be defined as in Lemma \ref{l:decomp}, $f_i\in L^{p_i}(w_i^{p_i})\cap L^{p_i}$ and $h_t\in L^\infty_c$ for all $t>0$ such that
\begin{align*}
\left|\left|\(\int_0^\infty|h_t|^2\frac{dt}{t}\)^\frac{1}{2}\right|\right|_{L^{p'}(w^p)}\leq1.
\end{align*}
Recall that the dual of $L^p(w^p)$ can be realized as $L^{p'}(w^p)$ if we take the the measure space to be $\R^n$ with measure $w(x)^pdx$.  We estimate \eqref{Lpbound} by duality making use of Lemmas \ref{l:decomp} and \ref{l:almostorth}
\begin{align*}
&\left|\int_{\R^n}\int_0^\infty\Theta_t(f_1,...,f_m)(x)h_t(x)\frac{dt}{t}w(x)^pdx\right|\\
&\hspace{1cm}=\left|\int_{\R^n}\int_0^\infty\sum_{j=1}^m\int_0^\infty\Theta_t\Pi_{j,s}(f_1,...,f_m)(x)w(x)h_t(x)w(x)^{p/p'}\frac{ds}{s}\frac{dt}{t}dx\right|\\
&\hspace{1cm}\leq\sum_{j=1}^m\left|\left|\(\int_{[0,\infty)^2}\(\frac{s}{t}\wedge\frac{t}{s}\)^{-\gamma\,'}|\Theta_t\Pi_{j,s}(f_1,...,f_m)(x)|^2\frac{ds}{s}\frac{dt}{t}\)^\frac{1}{2}\right|\right|_{L^p(w^p)}\\
&\hspace{5cm}\times\left|\left|\(\int_{[0,\infty)^2}\(\frac{s}{t}\wedge\frac{t}{s}\)^{\gamma\,'}|h_t|^2\frac{ds}{s}\frac{dt}{t}\)^\frac{1}{2}\right|\right|_{L^{p'}(w^p)}\\
&\hspace{1cm}\less\sum_{j=1}^m\sum_{k=1}^n\left|\left|\(\int_{[0,\infty)^2}\(\frac{s}{t}\wedge\frac{t}{s}\)^{\gamma\,'}\(MQ_s^{2,k}f_j\prod_{i\neq j}Mf_i\)^2\frac{dt}{t}\frac{ds}{s}\)^\frac{1}{2}\right|\right|_{L^p(w^p)}\\
&\hspace{1cm}\less\sum_{j=1}^m\sum_{k=1}^n\left|\left|\(\int_0^\infty\(MQ_s^{2,k}f_j\)^2\frac{ds}{s}\)^\frac{1}{2}\prod_{i\neq j}Mf_i\right|\right|_{L^p(w^p)}\\
&\hspace{1cm}\less\sum_{j=1}^m\sum_{k=1}^n[w_j^{p_j}]_{A_{p_j}}^{\max(\frac{1}{2},p_j'/p_j)}||g_{\psi^{2,k}}(f_j)||_{L^{p_j}(w_j^{p_j})}\prod_{i\neq j}||Mf_i||_{L^{p_i}(w_i^{p_i})}\\
&\hspace{1cm}\less\sum_{j=1}^m[w_j^{p_j}]_{A_{p_j}}^{\max(1,p_j'/p_j)+\max(\frac{1}{2},p_j'/p_j)}||f_j||_{L^{p_j}(w_j^{p_j})}\prod_{i\neq j}[w_i^{p_i}]_{A_{p_i}}^\frac{1}{p_i-1}||f_i||_{L^{p_i}(w_i^{p_i})}\\
&\hspace{1cm}\less\prod_{i=1}^m\(1+[w_j^{p_j}]_{A_{p_j}}^{\max(1,p_j'/p_j)+\max(\frac{1}{2},p_j'/p_j)}\)||f_i||_{L^{p_i}(w_i^{p_i})}.
\end{align*}

Here we have used the weighted bound for the Hardy-Littlewood maximal function, the Fefferman-Stein vector-valued maximal function bound proved originally by Anderson-John \cite{AJ} and proved with the sharp dependence on the weight constant by Cruz-Uribe-Martell-Perez \cite{CUMP}.  We also used the weighted square function estimate for $g_{\psi^{2,k}}$ for $k=1,...,m$ originally proved by Kurtz \cite{K} and proved with sharp dependence on the weight constant by Lerner in \cite{L}.
\end{proof}

Although we use sharp estimates to track the weight constant dependence, we are not claiming that this bound on $S$ is sharp.  In the above argument, once we have bounded the dual pairing by products of maximal functions and $g_\psi$ functions, the estimates may be sharp, but there is no evidence provided here that the estimates up to that point are sharp.  We track the constant so that we can explicitly apply the extrapolation theorem of Grafakos-Martell \cite{GM}.

\section{Carleson and Strong Carleson Measures}

This section is dedicated to defining the cancellation conditions that we will use for $\theta_t$, and proving some properties about them.  We start with a discussion to motivate these definitions and describe the role that they will play in the theory.

As discussed in the introduction, in the linear convolution operator setting with convolutions kernel $\psi_t$, if $g_\psi$ is bounded, then necessarily $\psi_t*1=0$ for all $t>0$.  So when working with the square function $g_\psi$ with $\psi_t(x)=t^{-n}\psi(t^{-1}x)$, it is not useful to consider Carleson measure type cancellation conditions like {\it (i)} from Theorem \ref{t:main}.  But if one does not require the convolution kernels $\psi_t$ to be the dilations of a single function $\psi$ or allows for the non-convolution operators, then mean zero is not a necessary condition for square function bounds.  From the classical theory of Carleson measures \cite{C}, we know that in the linear setting $S$ is bounded on $L^2$ if and only if $|\Theta_t(1)(x)|^2\frac{dt\,dx}{t}$ is a Carleson measure, although this may not in general be sufficient for $S$ to be bounded for all $1<p<\infty$.  We will define the strong Carleson condition for $\Theta_t$ and prove that it does imply bounds for all $1<p<\infty$.  There is a stronger notion of Carleson measure defined by Journ\'e in \cite{Jo} that is related to some of the Carleson conditions in this work.  We will discuss this in a little more depth in Section 4.

\begin{definition}
A positive measure $d\mu(x,t)$ on $\R^{n+1}_{+}=\{(x,t):x\in\R^n,\;t>0\}$  is a {\it Carleson} measure if
\begin{align}\label{Carleson}
    \|d\mu\|_{\mathcal{C}}=\sup_Q\frac{1}{|Q|}d\mu(T(Q))<\infty\, ,
\end{align}
where the supremum is taken over all cubes $Q\subset\R^n$, $|Q|$ denotes the Lebesgue measure of the cube $Q$,   $T(Q)=Q\times (0,\ell(Q)]$ denotes the \emph{Carleson box} over $Q$, and  $\ell(Q)$ is the side length of $Q$. \\
Suppose $\mu$ is a non-negative measure on $\R^{n+1}_+$ defined by 
\begin{align}
d\mu(x,t)=F(x,t)d\tau(t)dx\label{SCform}
\end{align}
for some $F\in L^1_{loc}(\R^{n+1}_+,d\tau(t)dx)$.  We say that $\mu$ is a {\it strong Carleson} measure if
\begin{align}
||\mu||_{\mathcal{SC}}=\sup_Q\sup_{x\in Q}\int_0^{\ell(Q)}F(x,t)d\tau(t)<\infty.\label{strongCarleson}
\end{align}
Given an operator $\Theta_t$ with kernel satisfying \eqref{size}, we say that $\Theta_t$ satisfies the Carleson condition, respectively strong Carleson condition, if $|\Theta_t(1,...,1)(x)|^2\frac{dt}{t}dx$ is a Carleson measure, respectively strong Carleson measure.
\end{definition}

In \cite{CJ} and \cite{A}, Christ-Journ\'e and Auscher define a Carleson function to be a function $G:\R^{n+1}_+\rightarrow\C$ such that $|G(x,t)|^2\frac{dt}{t}dx$ is a Carleson measure.  So our definition of the Carleson condition for $\Theta_t$ is exactly that $G(x,t)=\Theta_t(1,...,1)(x)$ is a Carleson function in the language of Christ-Journ\'e and Auscher.  We state this definition with a general measure $d\tau(t)$ instead of just $\frac{dt}{t}$ because the results in Section 4 can be applied to the discrete case where $d\tau(t)=\delta_{2^{-k}}(t)$, like the ones in \cite{DRdF}, \cite{MN}, \cite{Hart1}, \cite{GLMY}, and many others.

It is trivial to see that if a non-negative measure $d\mu(x,t)=F(x,t)d\tau(t)dx$ is a strong Carleson measure, then it is a Carleson measure and $||\mu||_{\mathcal C}\leq||\mu||_{\mathcal{SC}}$, but we can also prove a partial converse to this for non-negative measures of the form $|\Theta_t(1,...,1)|^2\frac{dt\,dx}{t}$ for $\theta_t$ satisfying \eqref{size} and \eqref{regy}.  In Propositions \ref{p:CarlesonstrongCarleson} and \ref{p:Carlesontwocube}, we prove that $\Theta_t$ satisfies the two-cube and the Carleson conditions if and only if it satisfies the strong Carleson condition.  We first define the two-cube testing condition.

\begin{definition}
Let $\theta_t$ satisfy \eqref{size} and $\Theta_t$ be defined as in \eqref{theta}.  We say that $\Theta_t$ satisfies the {\it two-cube testing condition} if
\begin{align}
\sup_{R\subset Q}\frac{1}{|R|}\int_R\int_{\ell(R)}^{\ell(Q)}|\Theta_t(\chi_{(2R)^c},...,\chi_{(2R)^c})(x)-\Theta_t(\chi_{(2Q)^c},...,\chi_{(2Q)^c})(x)|^2\frac{dt}{t}dx<\infty,\label{twocube}
\end{align}
where the supremum is taken over all cubes $R$ and $Q$ with $R\subset Q$.
\end{definition}

In the linear case, the two-cube condition for $\Theta_t$ becomes
\begin{align*}
\sup_{R\subset Q}\frac{1}{|R|}\int_R\int_{\ell(R)}^{\ell(Q)}|\Theta_t(\chi_{2Q\backslash 2R})(x)|^2\frac{dt}{t}dx<\infty.
\end{align*}
The two-cube testing condition is a technical condition that arrises to conclude the uniform strong Carleson bound from the average control of the Carleson condition.  Before we verify the equivalence between these conditions, we first prove a lemma.

\begin{lemma}\label{l:testimates}
Suppose $\theta_t$ satisfies \eqref{size}.  Then we have the following
\begin{itemize}
\item[\it i.]  Suppose $E_1,...,E_m\subset\R^n$, then 
\begin{align}
\sup_{x\in\R^n}|\Theta_t(\chi_{E_1},...,\chi_{E_m})(x)|\less t^{-n}\min(|E_1|,...,|E_m|).\label{tlarge}
\end{align}
\item[\it ii.]  Suppose $E_1,...,E_m\subset\R^n$ and $2Q\subset\R^n\backslash E_i$ for some $i$ and cube $Q$ (here $2Q$ is the double of $Q$ with the same center), then
\begin{align}
\sup_{x\in Q}|\Theta_t(\chi_{E_1},...,\chi_{E_m})(x)|\less t^{N-n}\ell(Q)^{-(N-n)}\label{tsmall}
\end{align}
\end{itemize}
\end{lemma}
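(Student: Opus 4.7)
The plan is to reduce both parts of Lemma~\ref{l:testimates} to routine one-variable tail computations using the pointwise size estimate \eqref{size}. Since that bound splits as a product of factors, each depending only on $x-y_i$, we get immediately
$$|\Theta_t(\chi_{E_1},\ldots,\chi_{E_m})(x)|\less\prod_{i=1}^m\int_{E_i}\frac{t^{-n}\,dy_i}{(1+t^{-1}|x-y_i|)^N},$$
so everything reduces to independently estimating $m$ scalar integrals, and the two parts differ only in how we treat the distinguished factor.

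For part (i), I would pick an index $i^{*}$ realizing $|E_{i^{*}}|=\min_j|E_j|$. For the $i^{*}$ factor I drop the denominator and use the trivial pointwise bound on the integrand to obtain $t^{-n}|E_{i^{*}}|$. For each remaining $i\neq i^{*}$ I enlarge $E_i$ to all of $\R^n$ and apply the standard integrability of $(1+|u|)^{-N}$ for $N>n$ after the change of variables $u=t^{-1}(y_i-x)$, which yields a uniform $O(1)$ bound. Multiplying the $m$ factors gives \eqref{tlarge}.

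For part (ii), fix the index $i$ with $2Q\subset\R^n\setminus E_i$. For $x\in Q$ and $y_i\in E_i$ an elementary geometric argument gives $|x-y_i|\more\ell(Q)$. Substituting $u=t^{-1}(y_i-x)$ in the $i$th factor, I estimate
$$\int_{E_i}\frac{t^{-n}\,dy_i}{(1+t^{-1}|x-y_i|)^N}\less\int_{|u|\more t^{-1}\ell(Q)}\frac{du}{(1+|u|)^N}\less(1+t^{-1}\ell(Q))^{n-N},$$
again using $N>n$. When $t\leq\ell(Q)$ this is comparable to $t^{N-n}\ell(Q)^{-(N-n)}$; when $t>\ell(Q)$ the stated bound $t^{N-n}\ell(Q)^{-(N-n)}$ already exceeds $1$, so it holds trivially from the uniform $O(1)$ estimate on the full integral. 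The remaining $m-1$ factors are each $O(1)$ by the same $\R^n$-enlargement, yielding \eqref{tsmall}.

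There is no serious obstacle here; the lemma is a direct consequence of \eqref{size} together with the integrability of the Poisson-type tail $(1+|u|)^{-N}$ over $\R^n$. The only step requiring a moment's care is the dichotomy $t\leq\ell(Q)$ versus $t>\ell(Q)$ in (ii), where one must notice that the stated bound is weakest (and so automatic) in the large-$t$ regime and needs the tail calculation only in the small-$t$ regime.
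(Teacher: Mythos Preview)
Your proof is correct and follows essentially the same approach as the paper: factor via \eqref{size}, bound all but one integral by $O(1)$, and extract the stated decay from the distinguished factor. The only cosmetic difference is that in part~{\it (ii)} the paper drops the $1$ in the denominator at the outset, writing $\frac{t^{-n}}{(t^{-1}|x-y_i|)^N}=t^{N-n}|x-y_i|^{-N}$ and integrating over $\{|x-y_i|\more\ell(Q)\}$ directly, which yields $t^{N-n}\ell(Q)^{-(N-n)}$ without your $t\le\ell(Q)$ versus $t>\ell(Q)$ case split.
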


\begin{proof}
For $E_1,...,E_m\subset\R^n$ and $x\in\R^n$, using \eqref{size} we have
\begin{align*}
|\Theta_t(\chi_{E_1},...,\chi_{E_m})(x)|&\less\prod_{j=1}^m\int_{\R^n}\frac{t^{-n}}{(1+t^{-1}|x-y_j|)^N}\chi_{E_j}(y_j)dy_j\less t^{-n}|E_i|
\end{align*}
for each $i=1,...,m$.  For {\it (ii)}, for $x\in Q\subset2Q\subset\R^n\backslash E_i$, it follows that $|x-y_i|>\ell(Q)$ for all $y_i\in E_i$.  Then using \eqref{size}, it follows that
\begin{align*}
|\Theta_t(\chi_{E_1},...,\chi_{E_m})(x)|&\less\prod_{j=1}^m\int_{\R^n}\frac{t^{-n}}{(1+t^{-1}|x-y_j|)^N}\chi_{E_j}(y_j)dy_j\\
&\less\int_{E_i}\frac{t^{-n}}{(t^{-1}|x-y_i|)^N}dy_i\\
&\less t^{N-n}\int_{|x-y_i|>\ell(Q)}\frac{1}{|x-y_i|^N}dy_i\\
&\less t^{N-n}\ell(Q)^{-(N-n)}.
\end{align*}
\end{proof}

\begin{proposition}\label{p:CarlesonstrongCarleson}
Suppose $\theta_t$ satisfies \eqref{size} and \eqref{regy}.  If $\Theta_t(x)$ satisfies the Carleson and the two cube testing conditions, then $\Theta_t$ satisfies the strong Carleson condition.
\end{proposition}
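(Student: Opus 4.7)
The plan is to upgrade the average Carleson bound to the pointwise strong Carleson bound by a Lebesgue differentiation argument, controlling the resulting averages over small subcubes by a three-piece multilinear decomposition in which the Carleson condition on the subcube, the two-cube testing condition, and the tail estimates of Lemma~\ref{l:testimates} each dispatch exactly one piece.

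Fix a cube $Q$ and set $F(x)=\int_0^{\ell(Q)}|\Theta_t(1,\ldots,1)(x)|^2\tfrac{dt}{t}$. The Carleson hypothesis applied to $Q$ gives $\int_Q F\,dx\less\|\mu\|_{\mathcal C}|Q|$, so $F\in L^1(Q)$ and at every Lebesgue point $x_0\in Q$ of $F$ one has $F(x_0)=\lim_{s\to 0^+}|R_s|^{-1}\int_{R_s}F\,dx$, where $R_s\subset Q$ is a cube of side $s$ containing $x_0$. Since a.e. $x_0\in Q$ is such a Lebesgue point, it suffices to bound this limit by a constant depending only on the Carleson constant, the two-cube constant, $N$, $\gamma$, and $m$.

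For $R=R_s$ split $\int_0^{\ell(Q)}=\int_0^{\ell(R)}+\int_{\ell(R)}^{\ell(Q)}$. The average over $R$ of the first integral is at most $\|\mu\|_{\mathcal C}$ by the Carleson condition applied to $R$. On the second range I would decompose $\Theta_t(1,\ldots,1)=A_t+B_t+C_t$ where $A_t=\Theta_t(1,\ldots,1)-\Theta_t(\chi_{(2R)^c},\ldots,\chi_{(2R)^c})$, $B_t=\Theta_t(\chi_{(2R)^c},\ldots,\chi_{(2R)^c})-\Theta_t(\chi_{(2Q)^c},\ldots,\chi_{(2Q)^c})$, and $C_t=\Theta_t(\chi_{(2Q)^c},\ldots,\chi_{(2Q)^c})$, and use $|A_t+B_t+C_t|^2\leq 3(|A_t|^2+|B_t|^2+|C_t|^2)$ to treat them independently. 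The expansion $1=\chi_{2R}+\chi_{(2R)^c}$ writes $A_t$ as a sum of $2^m-1$ terms each carrying at least one slot $\chi_{2R}$, and Lemma~\ref{l:testimates}(i) bounds every such term pointwise by $t^{-n}|2R|\less(\ell(R)/t)^n$, whose square integrates in $dt/t$ over $(\ell(R),\ell(Q)]$ to an absolute constant. For $C_t$, any $x\in R\subset Q$ lies in a concentric cube $Q'$ of side $\ell(Q)/2$ with $2Q'\subset 2Q$, so Lemma~\ref{l:testimates}(ii) yields $|C_t(x)|\less(t/\ell(Q))^{N-n}$, whose square integrates in $dt/t$ over $(0,\ell(Q)]$ to a finite constant since $N>n$. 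The middle bracket $B_t$ is exactly the quantity whose average over $R$ is controlled by the two-cube testing hypothesis, uniformly in $R\subset Q$. Combining these three estimates with the Carleson bound for the first range produces a bound for $|R_s|^{-1}\int_{R_s}F\,dx$ independent of $s$, and sending $s\to 0^+$ at a Lebesgue point gives the required uniform pointwise control.

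The main obstacle is conceptual rather than computational: because $\theta_t$ carries no regularity in the $x$-variable, the middle bracket $B_t$ admits no pointwise bound, and one must engineer the decomposition so that every error one does estimate pointwise either involves integration over the $2R$-neighborhood (handled by the size bound $t^{-n}|2R|$) or excludes the $2Q$-neighborhood (handled by the tail decay $(t/\ell(Q))^{N-n}$), leaving $B_t$ alone for the averaged two-cube hypothesis. Once this splitting is fixed, the remaining kernel estimates are routine applications of Lemma~\ref{l:testimates} and the hypothesized size bound \eqref{size}.
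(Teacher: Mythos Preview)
Your proof is correct and follows the same Lebesgue-point reduction as the paper (the paper phrases it as $G_Q\le MG_Q$ a.e., which is the same device), together with the same pointwise kernel bounds from Lemma~\ref{l:testimates} and the two-cube hypothesis for the middle piece. The one genuine difference is in how you handle the short $t$-range. The paper first invokes the Carleson condition to prove unweighted $L^2$ boundedness of $S$ (via the Coifman--Meyer decomposition $\Theta_t=R_t+U_t$ and Theorem~\ref{t:weightedT1}, which in particular uses the regularity hypothesis \eqref{regy}), and then uses that boundedness to control the ``all-$\chi_{2R}$'' term $\frac{1}{|R|}\int_R\int_0^{\ell(Q)}|\Theta_t(\chi_{2R},\ldots,\chi_{2R})|^2\frac{dt}{t}$. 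You instead split the $t$-integral at $\ell(R)$ and apply the Carleson hypothesis directly to the subcube $R$ to bound $\frac{1}{|R|}\int_R\int_0^{\ell(R)}|\Theta_t(1,\ldots,1)|^2\frac{dt}{t}\le\|\mu\|_{\mathcal C}$, which sidesteps the $S$-boundedness detour entirely. This buys you a more self-contained argument that never appeals to \eqref{regy} (only \eqref{size} via Lemma~\ref{l:testimates}); the paper's route has the incidental payoff of establishing the $L^2$ bound for $S$ under the Carleson condition alone, which it reuses elsewhere. Two cosmetic remarks: your ``concentric cube $Q'$'' is really the cube centered at $x$, and in fact you can take $Q'=Q$ directly in Lemma~\ref{l:testimates}{\it(ii)} since $2Q\subset\R^n\setminus(2Q)^c$ trivially; and the final pointwise bound holds a.e., which is the correct reading of the essential supremum in the strong Carleson definition.
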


\begin{proof}
We first prove a multilinear result analog of the result of Carleson and Christ-Journ\'e mentioned above, that $\Theta_t$ satisfies the Carleson condition implies that $S$ satisfies the unweighted bound \eqref{Lpbound} for $p=2$.  That is $d\mu(x,t)=|\Theta_t(1,...,1)(x)|^2\frac{dt\,dx}{t}$ is a Carleson measure implies for all $1<p_1,...,p_m<\infty$ satisfying \eqref{Holder} with $p=2$, $S$ is bounded from $L^{p_1}\times\cdots\times L^{p_m}$ into $L^2$.  To prove this we adapt a familiar technique from Coifman-Meyer, see e.g. \cite{CM2} or \cite{CM3}.  Decompose $\Theta_t=(\Theta_t-M_{\Theta_t(1,...,1)}\mathbb P_t)-M_{\Theta_t(1,...,1)}\mathbb P_t=R_t+U_t$ where 
\begin{align}
\mathbb P_t(f_1,...,f_m)=\prod_{i=1}^mP_tf_i\label{appid}
\end{align}
and $P_t$ is a smooth approximation to the identity.  The operator $R_t$ satisfies the conditions of Theorem \ref{t:weightedT1}, and hence the square function associated to $R_t$ is bounded on the appropriate spaces.  The second term is bounded as well using the following Carleson measure bound
\begin{align*}
\left|\left|\(\int_0^\infty|U_t(f_1,...,f_m)|^2\frac{dt}{t}\)^\frac{1}{2}\right|\right|_{L^2}&\leq\prod_{i=1}^m\(\int_{\R^{n+1}_+}|P_tf_i(x)|^{p_i}d\mu(x,t)\)^\frac{1}{p_i}\\
&\less\prod_{i=1}^m||f_i||_{L^{p_i}}.
\end{align*}
We use a bound proved by Carleson \cite{C}, that $\{P_t\}_{t>0}$ is bounded from $L^q(\R^n)$ into $L^q(\R^{n+1}_+,d\mu)$ for all $1<q<\infty$ whenever $d\mu(x,t)$ is a Carleson measure.  We now move on to estimate \eqref{strongCarleson}, so take a cube $Q\subset\R^n$ and define
\begin{align*}
G_Q(x)=\chi_Q(x)\int_0^{\ell(Q)}d\mu(x,t).
\end{align*}
To prove that $\mu$ is a strong Carleson measure, it is sufficient to show that $||G_Q||_{L^\infty}\less1$ where the constant is independent of $Q\subset\R^n$.  Since $d\mu$ is locally integrable in $\R^{n+1}_+$ and $d\mu$ is a Carleson measure, it follows that $G_Q\in L^1(\R^n)$.  Then we have that $G_Q(x)\leq MG_Q(x)$ for almost every $x\in\R^n$.  So we estimate $||MG_Q||_{L^\infty}$
\begin{align*}
MG_Q(x)&=\sup_{R\ni x}\frac{1}{|R|}\int_R\int_0^{\ell(Q)}|\Theta_t(1,...,1)(y)|^2\chi_Q(y)\frac{dt}{t}dy\\
&=\sup_{R\ni x:\;R\subset Q}\frac{1}{|R|}\int_R\int_0^{\ell(Q)}|\Theta_t(1,...,1)(y)|^2\frac{dt}{t}dy\\
&\leq\sup_{R\ni x:\;R\subset Q}\frac{1}{|R|}\int_R\int_0^{\ell(Q)}|\Theta_t(\chi_{2R},...,\chi_{2R})(y)|^2\frac{dt}{t}dy\\
&\hspace{1.5cm}+\sup_{R\ni x:\;R\subset Q}\sum_{\vec F\in\Lambda}\frac{1}{|R|}\int_R\int_0^{\ell(R)}|\Theta_t(\chi_{F_1},...,\chi_{F_m})(y)|^2\frac{dt}{t}dy\\
&\hspace{2.5cm}+\sup_{R\ni x:\;R\subset Q}\sum_{\vec F\in\Lambda}\frac{1}{|R|}\int_R\int_{\ell(R)}^{\ell(Q)}|\Theta_t(\chi_{F_1},...,\chi_{F_m})(y)|^2\frac{dt}{t}dy\\
&=I+II+III.
\end{align*}
where
\begin{align*}
\Lambda=\{\vec F=(F_1,...,F_m):F_i=2R\text{ or }F_i=(2R)^c\}\backslash\{(2R,...,2R)\}.
\end{align*}
Note that we may make the reduction to cubes $R\subset Q$ since $\supp(G_Q)\subset Q$ and $G_Q\geq0$.  For each cube $R\subset Q\subset\R^n$, we estimate $I$ using that boundedness of $S$
\begin{align*}
\frac{1}{|R|}\int_R\int_0^{\ell(Q)}|\Theta_t(\chi_{2R},...,\chi_{2R})(y)|^2\chi_R(y)\frac{dt}{t}dy&\leq\frac{1}{|R|}\int_{\R^n}\int_0^\infty|\Theta_t(\chi_{2R},...,\chi_{2R})(y)|^2\frac{dt}{t}dy\\
&\less\frac{1}{|R|}\prod_{i=1}^m||\chi_{2R}||_{L^{p_i}}^2\less1.
\end{align*}
Therefore $I$ is bounded independent of $x$ and $Q$.  We bound the second term there exists at least one $F_i=(2R)^c$.  Then using \eqref{tsmall} from Lemma \ref{l:testimates}, we have
\begin{align*}
\frac{1}{|R|}\int_R\int_0^{\ell(R)}|\Theta_t(\chi_{F_1},...,\chi_{F_m})(y)|^2\frac{dt}{t}dy&\less\frac{1}{|R|}\int_R\int_0^{\ell(R)}\frac{t^{2(N-n)}}{\ell(R)^{2k(N-n)}}\frac{dt}{t}dy\less1.
\end{align*}
Since $|\Lambda|=2^m-1$, this is sufficient to bound $II$.  Now for the term $III$, we first take $\vec F\in\Lambda$ such that at least one component $F_i=2R$.  Then by \eqref{tlarge} from Lemma \ref{l:testimates} we have
\begin{align*}
\frac{1}{|R|}\int_R\int_{\ell(R)}^{\ell(Q)}|\Theta_t(\chi_{F_1},...,\chi_{F_m})(y)|^2\frac{dt}{t}dy&\less\frac{1}{|R|}\int_R\int_{\ell(R)}^\infty t^{-2n}|2R|^2\frac{dt}{t}dy\less1.
\end{align*}
This bounds all but one term for $III$.  It remains to bound the term where $\vec F=((2R)^c,...,$ $(2R)^c)$.  We do this using \eqref{tsmall} from Lemma \ref{l:testimates} and the two cube condition \eqref{twocube}
\begin{align*}
&\frac{1}{|R|}\int_R\int_{\ell(R)}^{\ell(Q)}|\Theta_t(\chi_{(2R)^c},...,\chi_{(2R)^c})(y)|^2\frac{dt}{t}dy\\
&\hspace{1cm}\leq\frac{1}{|R|}\int_R\int_{\ell(R)}^{\ell(Q)}|\Theta_t(\chi_{(2Q)^c},...,\chi_{(2Q)^c})(y)|^2\frac{dt}{t}dy\\
&\hspace{2cm}+\frac{1}{|R|}\int_R\int_{\ell(R)}^{\ell(Q)}|\Theta_t(\chi_{(2Q)^c},...,\chi_{(2Q)^c})(y)-\Theta_t(\chi_{(2R)^c},...,\chi_{(2R)^c})(y)|^2\frac{dt}{t}dy\\
&\hspace{1cm}\less\frac{1}{|R|}\int_R\int_0^{\ell(Q)}t^{2(N-n)}\ell(Q)^{-2(N-n)}\frac{dt}{t}dy+1\less1
\end{align*}
Therefore $||MG_Q||_{L^\infty}\leq I+II+III\less1$ for all $Q\subset\R^n$ where the constant is independent of $Q$.  Now we can verify that $d\mu$ satisfies the strong Carleson condition
\begin{align*}
\sup_{Q\subset\R^n}\sup_{x\in Q}\int_0^{\ell(Q)}|\Theta_t(1,...,1)(x)|^2\frac{dt}{t}&\leq\sup_{Q\subset\R^n}||G_Q||_{L^\infty}\leq\sup_{Q\subset\R^n}||MG_Q||_{L^\infty}\less1.
\end{align*}
This completes the proof.
\end{proof}

\begin{proposition}\label{p:Carlesontwocube}
If $\theta_t$ satisfies \eqref{size}, \eqref{regy} and $\Theta_t$ satisfies the strong Carleson condition, then $\Theta_t$ satisfies the two cube condition \eqref{twocube}.
\end{proposition}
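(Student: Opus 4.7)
The plan is to peel $\Theta_t(\chi_{(2R)^c},\ldots,\chi_{(2R)^c})-\Theta_t(\chi_{(2Q)^c},\ldots,\chi_{(2Q)^c})$ apart by multilinearity in three stages so that every surviving summand is controlled by one of three mechanisms: the size bound \eqref{size} localized to a slot containing $\chi_{(2Q)^c}$, giving decay $(t/\ell(Q))^{N-n}$; the size bound localized to a slot containing $\chi_{2R}$, giving decay $(\ell(R)/t)^n$; or the strong Carleson hypothesis applied to $\Theta_t(1,\ldots,1)$. The sharper per-slot bounds follow from \eqref{size}: for $x\in R\subseteq Q$, any integral $\int_{\R^n}\Phi_t^N(x-y)\chi_E(y)\,dy$ is $\less 1$, with the refinements $\less(\ell(R)/t)^n$ when $E=2R$ and $t\ge\ell(R)$ (from $|x-y|\less\ell(R)$ combined with $|2R|\less\ell(R)^n$) and $\less(t/\ell(Q))^{N-n}$ when $E=(2Q)^c$ and $t\le\ell(Q)$ (from $|x-y|\ge\ell(Q)/2$ and $N>n$). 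Multiplying one such decaying factor by the trivial $\less 1$ bounds in the other $m-1$ slots controls $\Theta_t(\vec h)(x)$ whenever $\vec h$ contains at least one $\chi_{2R}$ or one $\chi_{(2Q)^c}$.

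For the expansion, first write $\chi_{(2R)^c}=\chi_{(2Q)^c}+\chi_{2Q\setminus 2R}$ and expand $\prod_i\chi_{(2R)^c}(y_i)$; the unique all-$\chi_{(2Q)^c}$ summand cancels $\Theta_t(\chi_{(2Q)^c},\ldots,\chi_{(2Q)^c})$, leaving $\sum_{\emptyset\ne S\subseteq\{1,\ldots,m\}}\Theta_t(\vec g^S)$ with $\chi_{2Q\setminus 2R}$ on $S$ and $\chi_{(2Q)^c}$ on $S^c$. Every $S\ne\{1,\ldots,m\}$ contains a $\chi_{(2Q)^c}$ slot, which yields $|\Theta_t(\vec g^S)(x)|\less(t/\ell(Q))^{N-n}$. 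The remaining term $\Theta_t(\chi_{2Q\setminus 2R},\ldots,\chi_{2Q\setminus 2R})$ I expand via $\chi_{2Q\setminus 2R}=\chi_{2Q}-\chi_{2R}$, producing $\sum_{T\subseteq\{1,\ldots,m\}}(-1)^{|T|}\Theta_t(\vec h^T)$ with $\chi_{2R}$ on $T$ and $\chi_{2Q}$ on $T^c$; every $T\ne\emptyset$ is then $\less(\ell(R)/t)^n$. The last survivor $\Theta_t(\chi_{2Q},\ldots,\chi_{2Q})$ I expand once more via $\chi_{2Q}=1-\chi_{(2Q)^c}$: nonempty subsets again introduce a $\chi_{(2Q)^c}$ slot, and the final remaining piece is precisely $\Theta_t(1,\ldots,1)$, controlled by the strong Carleson hypothesis.

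Integrating in $t$, both $\int_0^{\ell(Q)}(t/\ell(Q))^{2(N-n)}\frac{dt}{t}\less 1$ (since $N>n$) and $\int_{\ell(R)}^{\infty}(\ell(R)/t)^{2n}\frac{dt}{t}\less 1$, and strong Carleson gives $\int_0^{\ell(Q)}|\Theta_t(1,\ldots,1)(x)|^2\frac{dt}{t}\less 1$ for all $x\in Q$. Summing the finitely many terms in the decomposition yields $\sup_{x\in R}\int_{\ell(R)}^{\ell(Q)}|\Theta_t(\chi_{(2R)^c},\ldots)-\Theta_t(\chi_{(2Q)^c},\ldots)|^2\frac{dt}{t}\less 1$ with constant independent of $R\subseteq Q$, and averaging over $R$ produces \eqref{twocube}. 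The main obstacle is isolating the correct three-stage decomposition: the intermediate characteristic functions $\chi_{2Q\setminus 2R}$ and $\chi_{2Q}$ admit only the trivial slot bound $\less 1$, which alone would produce a divergent $\log(\ell(Q)/\ell(R))$ upon time integration, so one must peel them off via the identities $\chi_{2Q\setminus 2R}=\chi_{2Q}-\chi_{2R}$ and $\chi_{2Q}=1-\chi_{(2Q)^c}$ to reach either a decaying slot or the strong Carleson term. The regularity assumption \eqref{regy} is not needed.
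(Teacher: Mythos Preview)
Your proof is correct and follows essentially the same approach as the paper: both reduce the two-cube integrand to pieces controlled either by the size estimate \eqref{tsmall} (a slot hitting $(2Q)^c$ gives $(t/\ell(Q))^{N-n}$), by \eqref{tlarge} (a slot hitting $2R$ gives $(\ell(R)/t)^n$), or by the strong Carleson bound on $\Theta_t(1,\ldots,1)$, and then integrate in $t$.  The only difference is organizational: the paper telescopes slot by slot (first writing the difference as $\sum_j\Theta_t(\ldots,\chi_{2Q\setminus 2R},\ldots)$, then isolating the one term with no $(2Q)^c$ slot and telescoping it against $\Theta_t(1,\ldots,1)$), whereas you perform three successive full multilinear expansions along $\chi_{(2R)^c}=\chi_{(2Q)^c}+\chi_{2Q\setminus 2R}$, then $\chi_{2Q\setminus 2R}=\chi_{2Q}-\chi_{2R}$, then $\chi_{2Q}=1-\chi_{(2Q)^c}$.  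Your expansion is arguably tidier, and your observation that \eqref{regy} is never used is correct---the paper's proof also relies only on \eqref{size} via Lemma~\ref{l:testimates} and the strong Carleson hypothesis.
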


\begin{proof}
We estimate \eqref{twocube} for $R\subset Q\subset\R^n$
\begin{align*}
&\frac{1}{|R|}\int_R\int_{\ell(R)}^{\ell(Q)}|\Theta_t(\chi_{(2R)^c},...,\chi_{(2R)^c})(x)-\Theta_t(\chi_{(2Q)^c},...,\chi_{(2Q)^c})(x)|^2\frac{dt}{t}dx\\
&\hspace{1cm}\leq\sum_{j=1}^m\frac{1}{|R|}\int_R\int_{\ell(R)}^{\ell(Q)}|\Theta_t(\chi_{(2R)^c},...,\chi_{(2R)^c}-\chi_{(2Q)^c},...,\chi_{(2Q)^c})(x)|^2\frac{dt}{t}dx\\
&\hspace{1cm}\leq\frac{1}{|R|}\int_R\int_{\ell(R)}^{\ell(Q)}|\Theta_t(\chi_{(2R)^c},...,\chi_{(2R)^c},\chi_{2Q\backslash2R})(x)|^2\frac{dt}{t}dx\\
&\hspace{2cm}+\sum_{j=1}^{m-1}\frac{1}{|R|}\int_R\int_0^{\ell(Q)}|\Theta_t(\chi_{(2R)^c},...,\chi_{2Q\backslash 2R},...,\chi_{(2Q)^c})(x)|^2\frac{dt}{t}dx\\
&\hspace{1cm}\leq\frac{1}{|R|}\int_R\int_{\ell(R)}^{\ell(Q)}|\Theta_t(1,...,1)(x)-\Theta_t(\chi_{(2R)^c},...,\chi_{(2R)^c},\chi_{2Q\backslash2R})(x)|^2\frac{dt}{t}dx\\
&\hspace{4.75cm}+\frac{1}{|R|}\int_R\int_{\ell(R)}^{\ell(Q)}|\Theta_t(1,...,1)(x)|^2\frac{dt}{t}dx\\
&\hspace{5.25cm}+\sum_{j=1}^{m-1}\frac{1}{|R|}\int_R\int_0^{\ell(Q)}t^{2(N-n)}\ell(Q)^{-2(N-n)}\frac{dt}{t}dx\\
&\hspace{1cm}\less\frac{1}{|R|}\int_R\int_{\ell(R)}^{\ell(Q)}|\Theta_t(1,...,1)(x)-\Theta_t(\chi_{(2R)^c},...,\chi_{(2R)^c},\chi_{2Q\backslash2R})(x)|^2\frac{dt}{t}dx+1.
\end{align*}
Here the middle term is bounded by the assumption that $|\Theta_t(1,...,1)(x)|^2\frac{dt}{t}dx$ is a strong Carleson measure.  Now we bound
\begin{align*}
&|\Theta_t(1,...,1)(x)-\Theta_t(\chi_{(2R)^c},...,\chi_{(2R)^c},\chi_{2Q\backslash2R})(x)|\\
&\hspace{1cm}\leq\sum_{j=1}^{m-1}|\Theta_t(\chi_{2R},...,\chi_{2R},1,...,1)(x)|+|\Theta_t(\chi_{(2R)^c},...,\chi_{(2R)^c},1-\chi_{2Q\backslash2R})(x)|\\
&\hspace{1cm}\less\sum_{j=1}^{m-1}t^{-n}|R|+|\Theta_t(\chi_{(2R)^c},...,\chi_{(2R)^c},1-\chi_{2Q\backslash2R})(x)|\\
&\hspace{1cm}\less t^{-n}|R|+|\Theta_t(\chi_{(2R)^c},...,\chi_{(2R)^c},\chi_{(2Q)^c})(x)|+|\Theta_t(\chi_{(2R)^c},...,\chi_{(2R)^c},\chi_{2R})(x)|\\
&\hspace{1cm}\less t^{-n}|R|+t^{N-n}\ell(Q)^{-(N-n)}.
\end{align*}
In the second to last line we bound the last term by $t^{-n}|R|$ and absorb it into the first term of the last line.  Therefore we have that
\begin{align*}
&\frac{1}{|R|}\int_R\int_{\ell(R)}^{\ell(Q)}|\Theta_t(1,...,1)(x)-\Theta_t(\chi_{(2R)^c},...,\chi_{(2R)^c},\chi_{2Q\backslash2R})(x)|^2\frac{dt}{t}dx\\
&\hspace{1cm}\less\frac{1}{|R|}\int_R\int_{\ell(R)}^\infty t^{-2n}|R|^2\frac{dt}{t}dx+\frac{1}{|R|}\int_R\int_0^{\ell(Q)}t^{2(N-n)}\ell(Q)^{-2(N-n)}\frac{dt}{t}dx\less1,
\end{align*}
and hence $\Theta_t$ satisfies the two cube condition \eqref{twocube}.
\end{proof}

We also prove that if $S$ is bounded from $L^{p_1}\times\cdots\times L^{p_m}$ into $L^p$ for some $1<p_1,...,p_m<\infty$ and $2\leq p<\infty$ satisfying \eqref{Holder}, then $\Theta_t$ satisfies the Carleson condition.  A partial converse to this was proved within the proof of Proposition \ref{p:CarlesonstrongCarleson}:  If $\Theta_t$ satisfies the Carleson condition, then $S$ is bounded from $L^{p_1}\times\cdots\times L^{p_m}$ into $L^2$ for all $1<p_1,...,p_m<\infty$.

\begin{proposition}\label{p:boundedimpliesCarleson}
Assume $\theta_t$ satisfies \eqref{size} and $S$ is bounded from $L^{p_1}\times\cdots\times L^{p_m}$ into $L^p$ for some $1<p_1,...,p_m<\infty$ and $2\leq p<\infty$ satisfying \eqref{Holder}.  Then it follows that $\Theta_t$ satisfies the Carleson condition.
\end{proposition}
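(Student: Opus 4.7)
The plan is to control $|\Theta_t(1,\dots,1)(x)|^2$ on the Carleson box $T(Q) = Q \times (0,\ell(Q)]$ by splitting each input $1 = \chi_{2Q} + \chi_{(2Q)^c}$. By multilinearity,
\begin{align*}
\Theta_t(1,\dots,1)(x) = \Theta_t(\chi_{2Q},\dots,\chi_{2Q})(x) + \sum_{\vec{E} \in \Lambda} \Theta_t(\chi_{E_1},\dots,\chi_{E_m})(x),
\end{align*}
where $\Lambda = \{\vec{E} = (E_1,\dots,E_m) : E_i \in \{2Q, (2Q)^c\}\} \setminus \{(2Q,\dots,2Q)\}$. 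Thus $|\Theta_t(1,\dots,1)(x)|^2$ is dominated (up to a factor of $2^m$) by the square of the main term plus the sum of squares of the $2^m-1$ error terms, and it suffices to bound each piece separately.

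For the main term, I would use the assumed boundedness of $S$ together with H\"older's inequality in the $x$ variable (this is where the hypothesis $p \geq 2$ is essential). Specifically,
\begin{align*}
\int_Q \int_0^{\ell(Q)} |\Theta_t(\chi_{2Q},\dots,\chi_{2Q})(x)|^2 \frac{dt}{t}\,dx
&\leq \int_Q |S(\chi_{2Q},\dots,\chi_{2Q})(x)|^2\,dx \\
&\leq |Q|^{1-2/p}\,\|S(\chi_{2Q},\dots,\chi_{2Q})\|_{L^p}^2 \\
&\lesssim |Q|^{1-2/p} \prod_{i=1}^m \|\chi_{2Q}\|_{L^{p_i}}^2 \lesssim |Q|^{1-2/p}|Q|^{2/p} = |Q|,
\end{align*}
using $\sum 1/p_i = 1/p$ in the last step. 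Dividing by $|Q|$ gives $\lesssim 1$.

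For each error term $\vec{E} \in \Lambda$, at least one coordinate equals $(2Q)^c$, so Lemma \ref{l:testimates}(ii) yields
\begin{align*}
|\Theta_t(\chi_{E_1},\dots,\chi_{E_m})(x)| \lesssim t^{N-n} \ell(Q)^{-(N-n)} \quad \text{for all } x \in Q.
\end{align*}
Integrating and using $N > n$,
\begin{align*}
\frac{1}{|Q|}\int_Q \int_0^{\ell(Q)} t^{2(N-n)} \ell(Q)^{-2(N-n)} \frac{dt}{t}\,dx \lesssim \ell(Q)^{-2(N-n)} \cdot \ell(Q)^{2(N-n)} = 1.
\end{align*}
Summing these $\lesssim 1$ contributions over the finitely many $\vec{E} \in \Lambda$ and combining with the main term bound shows that $\frac{1}{|Q|}\int_Q \int_0^{\ell(Q)}|\Theta_t(1,\dots,1)(x)|^2 \frac{dt}{t}\,dx \lesssim 1$ uniformly in $Q$, which is exactly the Carleson condition. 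The only subtle point is the use of $p \geq 2$ in the H\"older step for the main term; otherwise the argument is a direct cube decomposition combined with the decay estimate from Lemma \ref{l:testimates}.
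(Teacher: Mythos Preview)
Your proof is correct and follows essentially the same approach as the paper: the same decomposition $1=\chi_{2Q}+\chi_{(2Q)^c}$ in each slot, the same use of H\"older (equivalently Jensen) with $p\ge 2$ and the boundedness of $S$ for the main term, and the same application of Lemma~\ref{l:testimates}{\it(ii)} for the off-diagonal terms.
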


\begin{proof}
Fix a cube $Q\subset\R^n$ and we estimate
\begin{align}
\frac{1}{|Q|}\int_Q\int_0^{\ell(Q)}|\Theta_t(1,...,1)(x)|^2\frac{dt}{t}dx&\leq\frac{1}{|Q|}\int_Q\int_0^{\ell(Q)}|\Theta_t(\chi_{2Q},...,\chi_{2Q})(x)|^2\frac{dt}{t}dx\notag\\
&\hspace{.5cm}+\sum_{\vec F\in\Lambda}\frac{1}{|Q|}\int_Q\int_0^{\ell(Q)}|\Theta_t(\chi_{F_1},...,\chi_{F_m})(x)|^2\frac{dt}{t}dx\notag\\
&=I+II\label{IandII}
\end{align}
where
\begin{align*}
\Lambda=\{\vec F=(F_1,...,F_m):F_i=2Q\text{ or }F_i=(2Q)^c\}\backslash\{(2Q,...,2Q)\}.
\end{align*}
For each cube $Q\subset\R^n$, we estimate $I$
\begin{align*}
\frac{1}{|Q|}\int_Q\int_0^{\ell(Q)}|\Theta_t(\chi_{2Q},...,\chi_{2Q})(x)|^2\frac{dt}{t}dx&\leq\frac{1}{|Q|}\int_QS(\chi_{2Q},...,\chi_{2Q})(x)^2dx\\
&\leq\(\frac{1}{|Q|}\int_{\R^n}S(\chi_{2Q},...,\chi_{2Q})(x)^pdx\)^\frac{2}{p}\\
&\less|Q|^{-2/p}\prod_{i=1}^m||\chi_{2Q}||_{L^{p_i}}^2\less1.
\end{align*}
Now for the second term $II$, we fix $\vec F\in\Lambda$, which has at least one component $F_i=(2Q)^c$.  Then by \eqref{tsmall} from Lemma \ref{l:testimates} we have
\begin{align*}
\frac{1}{|Q|}\int_Q\int_0^{\ell(Q)}|\Theta_t(\chi_{F_1},...,\chi_{F_m})(x)|^2\frac{dt}{t}dx&\less\frac{1}{|Q|}\int_Q\int_0^{\ell(Q)}t^{2(N-n)}\ell(Q)^{-2(N-n)}\frac{dt}{t}dx\less1.
\end{align*}
Now noting that $|\Lambda|=2^m-1$, it follows that $II\less1$ as well.  So $\Theta_t$ satisfies the Carleson condition.
\end{proof}

In fact, this proves that if $\theta_t$ satisfies \eqref{size}, \eqref{regy} and $\Theta_t$ satisfies the Carleson condition, then $\Theta_t$ satisfies the strong Carleson condition if and only if $\Theta_t$ satisfies the two cube testing condition \eqref{twocube}.  We conclude this section with a few examples of various Carleson measure obtained from operators $\Theta_t$ satisfying \eqref{size} and \eqref{regy}.  
In Example \ref{ex:2}, we define a operators that give rise to strong Carleson measures, and in Example \ref{ex:3}, we define operators that give rise to operators that are Carleson measures, but not strong Carleson measures.  For the examples, let $P_t$ be a smooth approximation to the identity and $\mathbb P_t$ be as defined in \eqref{appid}.


\begin{example}\label{ex:2}
Suppose $\psi\in L^1$ with integral zero satisfying $|\psi(x)|\less\frac{1}{(1+|x|)^N}$ 
\begin{align}
\sup_{\xi\neq0}\int_0^\infty|\widehat\psi(t\xi)|^2\frac{dt}{t}<\infty,\label{FTmeanzero}
\end{align}
and define $Q_tf=\psi_t*f$.  Let $b\in L^q$ for some $1\leq q<\infty$ with $|b(x)-b(x')|\leq L|x-x'|^\alpha$ where $0<\alpha<N-n$, $\beta\in L^\infty(\R^{n+1}_+)$, and define $D_t(f_1,...,f_m)(x)=\beta(x,t)Q_tb(x)\mathbb P_t(f_1,...,f_m)(x)$.  It follows that the kernels of $D_t$, which are for $t>0$ 
\begin{align*}
d_t(x,y_1,...,y_m)=\beta(x,t)Q_tb(x)\prod_{i=1}^m\varphi_t(x-y_i),
\end{align*}
satisfy \eqref{size} and \eqref{regy}.  We also have that $\Theta_t(1,...,1)=\beta(x,t)Q_tb$, so we estimate
\begin{align*}
|Q_tb(x)|=\left|\int_{\R^n}\psi_t(x-y)(b(y)-b(x))dy\right|&\leq L\int_{\R^n}|\psi_t(x-y)|\,|x-y|^\alpha dy\\
&\less t^\alpha\int_{\R^n}\frac{t^{-n}}{(1+t^{-1}|x-y|)^{N-\alpha}}dy\less t^\alpha.
\end{align*}
Also we have that
\begin{align*}
|Q_tb(x)|&\leq||\psi_t||_{L^{q'}}||b||_{L^q}\less t^{-n/q}.
\end{align*}
Then it follows that
\begin{align*}
\int_0^{\ell(Q)}|\Theta_t(1,...,1)(x)|^2\frac{dt}{t}&\less ||\beta||_{L^\infty(\R^{n+1}_+)}^2\int_0^1t^{2\alpha}\frac{dt}{t}+||\beta||_{L^\infty(\R^{n+1}_+)}^2\int_1^\infty t^{-2n/q}\frac{dt}{t}\less1.
\end{align*}
Therefore with this selection of $b$ and $\beta$, it follows that $D_t$ satisfies the strong Carleson condition.  So by Theorem \ref{t:main}, it follows that
\begin{align*}
\left|\left|\(\int_0^\infty|D_t(f_1,...,f_m)|^2\frac{dt}{t}\)^\frac{1}{2}\right|\right|_{L^p(w^p)}\less\prod_{i=1}^m||f_i||_{L^{p_i}(w_i^{p_i})}
\end{align*}
for all $1<p_1,...,p_m<\infty$ and $w_i^{p_i}\in A_{p_i}$ where $w=w_1\cdots w_m$ and $p$ is defined by \eqref{Holder}, which allows for $1/m<p<\infty$.  Note that with an appropriate selection of $\beta_t$, the kernels $d_t(x,y)$ will not be smooth in the $x$ variable.  This is an operator to which one could not apply previous results.  Even in the linear case, one needed smoothness in $x$ to conclude bounds for for $p>2$ from the Carleson condition on $\Theta_t$.
\end{example}

\begin{example}\label{ex:3}
The purpose of this example is to construct an operator $\Theta_t$ satisfying \eqref{size} and \eqref{regy} such that $\Theta_t$ satisfies the Carleson condition, but not the strong Carleson condition.  Define $\psi(x)=\chi_{(0,1)}(x)-\chi_{(-1,0)}(x)$, $Q_tf=\psi_t*f$, $b(x)=\chi_{(0,1)}(x)$, and like above $D_t(f_1,...,f_m)(x)=Q_tb(x)\mathbb P_t(f_1,...,f_m)(x)$.  As above, we have that $D_t(1,...,1)=Q_tb$.  It is a quick computation to show that
\begin{align*}
\widehat\psi(\xi)=2\frac{1-\cos(\xi)}{i\xi}
\end{align*}
with the appropriate modification when $\xi=0$.  It follows then that $|\widehat\psi(\xi)|\less\min(|\xi|,|\xi|^{-1})$, and that
\begin{align*}
|D_t(1,...1)(x)|^2\frac{dt}{t}dx=|\psi_t*b(x)|^2\frac{dt}{t}dx
\end{align*}
is a Carleson measure.  Now we show that $D_t$ does not satisfy the strong Carleson condition.  Let $Q=[-1,0]$, $x\in[-1,0)\subset Q$, and we estimate \eqref{strongCarleson} with the following computation
\begin{align*}
\int_0^{\ell(Q)}|D_t1(x)|^2\frac{dt}{t}&=\int_0^1\left|\int_\R\psi_t(y)\chi_{(0,1)}(x-y)dy\right|^2\frac{dt}{t}\\
&\geq\int_{-x}^1\left|\int_{-t}^x\psi_t(y)dy\right|^2\frac{dt}{t}\\
&=\int_{-x}^1\frac{(x+t)^2}{t^2}\frac{dt}{t}\\
&=x^2\int_{-x}^1\frac{dt}{t^3}+2x\int_{-x}^1\frac{dt}{t^2}+\int_{-x}^1\frac{dt}{t}\\
&\geq x^2\int_0^1dt-2x-2-\log(-x)\\
&\geq-\log(-x)-2.
\end{align*}
Therefore
\begin{align*}
\sup_{x\in[-1,0]}\int_0^{\ell(Q)}|D_t1(x)|^2\frac{dt}{t}&\geq\sup_{x\in[-1,0)}-\log(-x)-2=\infty,
\end{align*}
and hence $D_t$ satisfies the Carleson condition, but not the strong Carleson condition.
\end{example}

\section{A Full Weighted T1 Theorem for Square Functions for $L^2$}

In this section, we develop some classical Carleson measure results in a weighted setting with strong Carleson measures.  With these new tools, we can apply some familiar arguments to complete the proof of Theorems \ref{t:main} and \ref{t:mconv}.  More precisely, Lemmas \ref{l:tentbound}, \ref{l:weightedA2bound} and Proposition \ref{p:strongCarlesonweightbound} are weighted versions of results proved by Carleson in \cite{C} where we use assume strong Carleson in place of Carleson conditions.

\begin{lemma}\label{l:tentbound}
If $\mu$ is a strong Carleson measure, then for any locally integrable function $w\geq0$ and $E\subset\R^n$
\begin{align}
\mu_w(\widehat E)\leq||\mu||_{\mathcal{SC}}\;w(E)\label{Journemeasure}
\end{align}
where $d\mu_w(x,t)=w(x)d\mu(x,t)$ and $\widehat E=\{(x,t)\in\R^{n+1}_+:B(x,t)\subset E\}$.
\end{lemma}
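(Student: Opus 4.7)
The plan is to unpack the strong Carleson hypothesis as $d\mu(x,t) = F(x,t)\,d\tau(t)\,dx$ and then reduce the inequality to a pointwise-in-$x$ bound via Tonelli's theorem. Writing
\begin{align*}
\mu_w(\widehat E) = \int_{\R^n} w(x) \int_{\{t>0 :\, B(x,t)\subset E\}} F(x,t)\,d\tau(t)\,dx,
\end{align*}
I would first observe that the inner set of $t$'s is empty unless $x \in E$, since $x \in B(x,t)$; thus the outer integral may be restricted to $E$.

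For each $x \in E$, set $T(x) = \sup\{t > 0 : B(x,t) \subset E\} \in [0, \infty]$, so that the inner integral in $t$ is dominated by $\int_0^{T(x)} F(x,t)\,d\tau(t)$. The heart of the proof is the observation that the strong Carleson condition gives a uniform pointwise-in-$x$ bound on this quantity: choosing any cube $Q \ni x$ with $\ell(Q) = T(x)$ (when $0 < T(x) < \infty$) yields $\int_0^{T(x)} F(x,t)\,d\tau(t) \leq ||\mu||_{\mathcal{SC}}$ directly from \eqref{strongCarleson}. The degenerate cases $T(x) = 0$ and $T(x) = \infty$ present no problem, being handled trivially and by monotone convergence (letting $\ell(Q) \to \infty$) respectively.

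Integrating this uniform bound against $w(x)\,dx$ over $E$ yields the desired estimate $\mu_w(\widehat E) \leq ||\mu||_{\mathcal{SC}}\,w(E)$. I do not foresee any genuine obstacle: the content of the lemma is really that the strong Carleson condition, in contrast to the ordinary Carleson condition, already encodes the pointwise-in-$x$ control of the $t$-integrated density that is precisely what is needed to replace Lebesgue measure on $\R^n$ by an arbitrary non-negative weight $w\,dx$, at the trivial cost of a single application of Tonelli.
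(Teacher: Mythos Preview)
Your argument is correct, and it is genuinely different from --- and more direct than --- the route taken in the paper. The paper performs a Calder\'on--Zygmund decomposition of $\chi_E$ at height $\tfrac12$, obtains a covering of $\widehat E$ by boxes $Q_j\times(0,2\sqrt n\,\ell(Q_j)]$, and then invokes the strong Carleson bound on each box before summing. That is the standard skeleton used to prove the classical unweighted estimate $\mu(\widehat E)\lesssim |E|$ for an \emph{ordinary} Carleson measure, where only averaged control over cubes is available and some Whitney-type covering is genuinely needed.

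You observed that the strong Carleson hypothesis already gives a \emph{pointwise-in-$x$} bound $\int_0^L F(x,t)\,d\tau(t)\le\|\mu\|_{\mathcal{SC}}$ for every $x$ and every $L>0$, which is exactly what the slice $\{t:B(x,t)\subset E\}=(0,T(x))$ calls for after one application of Tonelli. This bypasses the decomposition entirely and makes transparent why the strong Carleson condition is precisely the hypothesis under which one can replace $dx$ by an arbitrary non-negative $w(x)\,dx$. Your handling of the edge cases $T(x)=0$ and $T(x)=\infty$ is fine. The only tacit assumption in both arguments is enough measurability of $\widehat E$ for the integrals to make sense; in the application (the next lemma) $E$ is an open superlevel set, so this causes no trouble.
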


In \cite{Jo}, Journ\'e says that $d\mu_w$ is a Carleson measure with respect to $w\in A_2$ if it satisfied \eqref{Journemeasure}.  He uses this definition to prove that measures that satisfy this estimate also verify weighted analogs of Carleson measure bounds.  In particular, Journ\'e proves 

\begin{proof}
Let $Q_j$ be the Calder\'on-Zygmund decomposition of $\chi_E$ at height $\frac{1}{2}$.  Then
\begin{align*}
E\subset\bigcup_jQ_j\text{ and }|E|\leq\sum_j|Q_j|\leq2|E|.
\end{align*}
Let $Q_j^*$ be the dyadic cube with double the side length of $Q_j$ containing $Q_j$ and take $(x,t)\in\widehat E$.  Since $B(x,t)\subset E$ and $Q_j^*\not\subset E$, it follows that $B(x,t)\subset B(x,3\sqrt n\ell(Q_j))$.  Then 
\begin{align*}
\widehat E\subset\bigcup_jQ_j\times(0,2\sqrt n\ell(Q_j)]
\end{align*}
Now $d\mu(x,t)=F(x,t)d\tau(t)dx$ for some non-negative $F\in L^1_{loc}(\R^{n+1}_+)$.  So using that $\mu$ is a strong Carleson measure, it follows that
\begin{align*}
\mu_w(\widehat E)&\leq\sum_j\mu_w((E\cap Q_j)\times(0,2\sqrt n\ell(Q_j)])\\
&=\sum_j\int_{E\cap Q_j}\int_0^{2\sqrt n\ell(Q_j)}F(x,t)d\tau(t)w(x)\chi_{Q_j}(x)dx\\
&\leq||\mu||_{\mathcal{SC}}\sum_j\int_{E\cap Q_j}w(x)dx\\
&\leq||\mu||_{\mathcal{SC}}\;w(E).
\end{align*}
In the last line, we use that $E\cap Q_j$ are disjoint.
\end{proof}

\begin{lemma}\label{l:weightedA2bound}
Suppose $d\mu(x,t)=F(x,t)d\tau(t)dx$ is a strong Carleson measure and $|\phi(x)|\less\frac{1}{(1+|x|)^N}$ for some $N>n$.  Then for all $w\in A_p$ for $1<p<\infty$,
\begin{align}
\(\int_{\R^{n+1}_+}|\phi_t*f(x)|^pw(x)d\mu(x,t)\)^\frac{1}{p}\less||\mu||_{\mathcal{SC}}^{1/p}[w]_{A_p}^{1/(p-1)}||f||_{L^p(w)}.
\end{align}
\end{lemma}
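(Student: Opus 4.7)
My plan is to exploit Lemma \ref{l:tentbound} by converting the integral on the left-hand side into a distribution-function integral over tent-shaped sets of the form $\widehat{E}$, and then apply the sharp weighted bound for the Hardy-Littlewood maximal operator.

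The first step is the pointwise comparison: for any $(x,t)\in\R^{n+1}_+$ and any $y\in B(x,t)$, the decay $|\phi(z)|\lesssim(1+|z|)^{-N}$ together with $1+t^{-1}|y-z|\leq 2(1+t^{-1}|x-z|)$ (which is immediate from $|y-x|<t$) gives
\begin{align*}
|\phi_t*f(x)|\leq\int_{\R^n}\frac{t^{-n}}{(1+t^{-1}|x-z|)^N}|f(z)|\,dz\lesssim\int_{\R^n}\frac{t^{-n}}{(1+t^{-1}|y-z|)^N}|f(z)|\,dz\lesssim Mf(y).
\end{align*}
Hence there exists an absolute constant $c>0$ such that $|\phi_t*f(x)|>\lambda$ forces $Mf(y)>c\lambda$ for every $y\in B(x,t)$, so the superlevel set in $\R^{n+1}_+$ is contained in the tent over the superlevel set of $Mf$:
\begin{align*}
\{(x,t)\in\R^{n+1}_+:|\phi_t*f(x)|>\lambda\}\subset\widehat{E_{c\lambda}},\qquad E_{c\lambda}=\{y\in\R^n:Mf(y)>c\lambda\}.
\end{align*}

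The second step is to plug this into the distribution-function representation and apply Lemma \ref{l:tentbound} with the weight $w$ (which is locally integrable since $w\in A_p$):
\begin{align*}
\int_{\R^{n+1}_+}|\phi_t*f(x)|^pw(x)\,d\mu(x,t)&=p\int_0^\infty\lambda^{p-1}\mu_w(\{(x,t):|\phi_t*f(x)|>\lambda\})\,d\lambda\\
&\leq p\,\|\mu\|_{\mathcal{SC}}\int_0^\infty\lambda^{p-1}w(E_{c\lambda})\,d\lambda\\
&=c^{-p}\,\|\mu\|_{\mathcal{SC}}\int_{\R^n}(Mf(y))^pw(y)\,dy.
\end{align*}

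The third and final step is to invoke Buckley's sharp weighted bound for the Hardy-Littlewood maximal function, $\|Mf\|_{L^p(w)}\lesssim[w]_{A_p}^{1/(p-1)}\|f\|_{L^p(w)}$, which upon taking $p$-th roots yields exactly the inequality claimed in the lemma. No genuinely difficult step arises: the pointwise comparison $|\phi_t*f(x)|\lesssim\inf_{y\in B(x,t)}Mf(y)$ is the only place where the hypothesis on $\phi$ enters, and everything else is bookkeeping plus the black-box use of Lemma \ref{l:tentbound} and Buckley's theorem. The main subtlety worth flagging is simply that, in order to pass from the tent-level-set inclusion to an inequality of measures, we need $w$ to be locally integrable so that Lemma \ref{l:tentbound} applies; this is automatic for $A_p$ weights.
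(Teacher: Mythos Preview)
Your proof is correct and follows essentially the same route as the paper's: use the distribution function, show the superlevel set $\{(x,t):|\phi_t*f(x)|>\lambda\}$ lies in a tent $\widehat{E_\lambda}$, apply Lemma \ref{l:tentbound}, and finish with the sharp weighted maximal bound. The only cosmetic difference is that the paper passes through the non-tangential maximal function $M_\phi f(y)=\sup_{t>0}\sup_{|x-y|<t}|\phi_t*f(x)|$ (so that $E_\lambda=\{M_\phi f>\lambda\}$) and then invokes $M_\phi f\lesssim Mf$ at the end, whereas you combine these two steps by proving the pointwise bound $|\phi_t*f(x)|\lesssim\inf_{y\in B(x,t)}Mf(y)$ directly and taking $E_{c\lambda}=\{Mf>c\lambda\}$ from the outset.
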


\begin{proof}
Define the non-tangential maximal function
\begin{align*}
M_\phi f(x)=\sup_{t>0}\sup_{|x-y|<t}|\phi_t*f(t)|.
\end{align*}
For $\lambda>0$, define 
\begin{align*}
&E_\lambda=\{x\in\R^n:M_\phi f(x)>\lambda\}\\
&\widehat E_\lambda=\{(x,t)\in\R^{n+1}_+:B(x,t)\subset E_\lambda\}.
\end{align*}
It follows from Lemma \ref{l:tentbound} that $\mu_w(\widehat E_\lambda)\leq||\mu||_{\mathcal{SC}}\;w(E_\lambda)$ where again $d\mu_w(x,t)=w(x)d\mu(x,t)$.  Therefore
\begin{align*}
\int_{\R^{n+1}_+}|\phi_t*f(x)|^pw(x)d\mu(x,t)&=p\int_0^\infty\lambda^p\mu_w(\{(x,t)\in\R^{n+1}_+:|\phi_t*f(x)|>\lambda\})\frac{d\lambda}{\lambda}\\
&\leq p\int_0^\infty\lambda^p\mu_w(\widehat E_\lambda)\frac{d\lambda}{\lambda}\\
&\leq p||\mu||_{\mathcal{SC}}\int_0^\infty\lambda^pw(E_\lambda)\frac{d\lambda}{\lambda}\\
&=||\mu||_{\mathcal{SC}}\int_{\R^n}M_\phi f(x)^pw(x)dx\\
&\less||\mu||_{\mathcal{SC}}[w]_{A_p}^{p/(p-1)}||f||_{L^p(w)}^p.
\end{align*}
Here we use as before that $|\phi_t*f(x)|\less Mf(x)$ and $||Mf||_{L^p(w)}\less[w]_{A_p}^{1/(p-1)}||f||_{L^p(w)}$.
\end{proof}

\begin{proposition}\label{p:strongCarlesonweightbound}
Suppose $\theta_t$ satisfies \eqref{size} and \eqref{regy}.  If $\Theta_t$ satisfies the strong Carleson condition, then $S$ is satisfies \eqref{Lpbound} for all $w_i^{p_i}\in A_{p_i}$ and $1<p_1,...,p_m<\infty$ satisfying \eqref{Holder} with $p=2$ where $w=w_1\cdots w_m$.  Furthermore, the constant for this bound is at most a constant independent of $w_1,...,w_m$ times
\begin{align}
C_{m,n,w_1,...,w_m,p_1,...,p_m}&=\prod_{i=1}^m\(1+[w_i^{p_i}]_{A_{p_i}}^{\max(1,p_i'/p_i)+\max(1/2,p_i'/p_i)}\)\label{fullconstant}\\
&\hspace{5cm}+||\mu||_{\mathcal{SC}}^{m/2}\prod_{i=1}^m[w_i^{p_i}]_{A_{p_i}}^{p_i'/p_i}.\notag
\end{align}
\end{proposition}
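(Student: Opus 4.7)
The plan is to decompose $\Theta_t$ into a cancellation piece and a non-cancellation piece via the paraproduct identity that appears in the proof of Proposition \ref{p:CarlesonstrongCarleson}, handle the cancellation piece with the reduced $T1$ theorem from Section~2, and control the non-cancellation piece directly using the strong Carleson hypothesis together with the weighted estimates of Lemma \ref{l:weightedA2bound}. Concretely, fix a smooth approximation to the identity $P_tf=\varphi_t*f$ with $\widehat\varphi(0)=1$, and write
\begin{align*}
\Theta_t = R_t + U_t, \qquad U_t(f_1,\ldots,f_m)(x)=\Theta_t(1,\ldots,1)(x)\,\mathbb P_t(f_1,\ldots,f_m)(x),
\end{align*}
with $\mathbb P_t$ as in \eqref{appid}. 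The associated square functions satisfy $S\le S_R+S_U$ and I bound each piece separately.

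For $R_t$, since $P_t(1)\equiv 1$ a direct calculation gives $R_t(1,\ldots,1)\equiv 0$. Integrating \eqref{size} shows $|\Theta_t(1,\ldots,1)(x)|\lesssim 1$ uniformly in $(x,t)$, so the kernel $\Theta_t(1,\ldots,1)(x)\prod_i\varphi_t(x-y_i)$ of $U_t$ satisfies both \eqref{size} (with the same decay $N>n$) and \eqref{regy} (with $\gamma=1$, via the smoothness of $\varphi$). Subtracting, the kernel of $R_t$ satisfies \eqref{size} and \eqref{regy} as well, and Theorem \ref{t:weightedT1} then applies to $R_t$ and yields the $L^p(w^p)$ bound for $S_R$ with constant controlled by the first summand of \eqref{fullconstant}.

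For $U_t$, use that by hypothesis $d\mu(x,t)=|\Theta_t(1,\ldots,1)(x)|^2\tfrac{dt}{t}dx$ is a strong Carleson measure, and rewrite
\begin{align*}
\|S_U(f_1,\ldots,f_m)\|_{L^2(w^2)}^2=\int_{\R^{n+1}_+}\prod_{i=1}^m\bigl(|P_tf_i(x)|^{2}w_i(x)^{2}\bigr)\,d\mu(x,t).
\end{align*}
Apply H\"older's inequality in $d\mu$ with exponents $p_i/2$, which are conjugate since $\sum_i 2/p_i=2/p=1$, to obtain a product of single-variable integrals $\int |P_tf_i|^{p_i}w_i^{p_i}d\mu$. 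Each of these is estimated by Lemma \ref{l:weightedA2bound}, which applies because $w_i^{p_i}\in A_{p_i}$ and $\varphi$ has the required decay; this contributes the $\|\mu\|_{\mathcal{SC}}^{1/p_i}[w_i^{p_i}]_{A_{p_i}}^{1/(p_i-1)}\|f_i\|_{L^{p_i}(w_i^{p_i})}$ factor per index. Taking the resulting product and then the square root yields the second summand in \eqref{fullconstant}, and adding the $R_t$ and $U_t$ estimates completes the proof.

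The main technical obstacle is checking that the paraproduct correction $R_t$ still satisfies the hypotheses \eqref{size} and \eqref{regy} of Theorem \ref{t:weightedT1}; this comes down to the uniform bound $|\Theta_t(1,\ldots,1)|\lesssim 1$ and the smoothness of $\varphi$. The remaining work is essentially bookkeeping: carefully tracking the weight-constant dependence through the reduced $T1$ theorem and through the multilinear H\"older step, so that the two resulting constants assemble exactly into \eqref{fullconstant}.
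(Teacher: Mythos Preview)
Your proposal is correct and follows essentially the same route as the paper: the same paraproduct decomposition $\Theta_t=R_t+U_t$ with $U_t=M_{\Theta_t(1,\ldots,1)}\mathbb P_t$, the same application of Theorem \ref{t:weightedT1} to $R_t$, and the same H\"older-in-$d\mu$ argument followed by Lemma \ref{l:weightedA2bound} for $U_t$. The only additional detail you supply is the explicit verification that $R_t$ inherits \eqref{size} and \eqref{regy}, which the paper simply asserts.
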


\begin{proof}
Define $R_t=\Theta_t-M_{\Theta_t(1,...,1)}\mathbb P_t$ and $U_t=M_{\Theta_t(1,...,1)}\mathbb P_t$.  Then $R_t$ satisfies \eqref{size}, \eqref{regy}, and in addition $R_t(1,...,1)=0$ for all $t>0$.  Then by Theorem \ref{t:weightedT1}, it follows that
\begin{align*}
\left|\left|\(\int_0^\infty|R_t(f_1,...,f_m)|^2\frac{dt}{t}\)^\frac{1}{2}\right|\right|_{L^p(w^p)}\less\prod_{i=1}^m||f_i||_{L^{p_i}(w_i^{p_i})}.
\end{align*}
Now we turn to the $U_t$ term.  For any $w_i^{p_i}\in A_{p_i}$ for $1<p_1,...,p_m<\infty$ satisfying \eqref{Holder} with $p=2$, take $d\mu(x,t)=|\Theta_t(1,...,1)|^2\frac{dt\,dx}{t}$ it follows that
\begin{align*}
\left|\left|\(\int_0^\infty|U_t(f_1,...,f_m)|^2\frac{dt}{t}\)^\frac{1}{2}\right|\right|_{L^2(w^2)}^2&=\int_{\R^{n+1}_+}\(\prod_{i=1}^m|P_tf_i(x)|w_i(x)\)^2d\mu(x,t)\\
&\hspace{-.5cm}\leq\prod_{i=1}^m\(\int_{\R^{n+1}_+}|P_tf_i(x)|^{p_i}w_i(x)^{p_i}d\mu(x,t)\)^\frac{2}{p_i}\\
&\hspace{-.5cm}\less||\mu||_{\mathcal{SC}}^m\prod_{i=1}^m[w_i^{p_i}]_{A_{p_i}}^{2/(p_i-1)}||f_i||_{L^{p_i}(w_i^{p_i})}^2.
\end{align*}
The final inequality holds by Lemma \ref{l:weightedA2bound}.  The first term in the constant \eqref{fullconstant} is from the bound of $R_t$ by Theorem \ref{t:weightedT1} and the second term is from the bound of $U_t$ above.
\end{proof}

These results almost complete the proof of Theorem \ref{t:main}, except for dealing with a density issue with $f_i\in L^{p_i}(w_i^{p_i})\cap L^{p_i}$ and applying weight extrapolation.  Propositions \ref{p:CarlesonstrongCarleson} and \ref{p:Carlesontwocube} verify the equivalence of {\it (i)} and {\it (ii)} from Theorem \ref{t:main}.  By Proposition \ref{p:CarlesonstrongCarleson}, {\it (i)} implies that $S$ satisfies \eqref{Lpbound} for all $w_i^{p_i}\in A_{p_i}$ with $1<p_1,...,p_m$ and $p=2$ for $f_i\in L^{p_i}(w_i^{p_i})\cap L^{p_i}$.  In order to conclude boundedness for all $L^{p_i}(w_i^{p_i})$, we make a short density argument in following and apply the extrapolation theorem of Grafakos-Martell \cite{GM} to complete the proof of Theorem \ref{t:main}.  We will use a lemma to prove this.

\begin{lemma}\label{l:Lpw}
If $w\in A_p$ and $1<p<\infty$, then $\frac{1}{(d+|x_0-\;\cdot\;|)^n}\in L^p(w)$ for any $x_0\in\R^n$ and $d>0$.
\end{lemma}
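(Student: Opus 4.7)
The plan is to compare the function $(d+|x_0-\cdot|)^{-n}$ pointwise to the Hardy--Littlewood maximal function of $g=\chi_{Q_0}$, where $Q_0$ denotes the cube centered at $x_0$ of side length $d$, and then invoke the Muckenhoupt weighted bound for $M$ on $L^p(w)$.

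First I would establish the pointwise lower bound $Mg(x)\gtrsim d^n(d+|x_0-x|)^{-n}$ for every $x\in\R^n$, with an implicit constant depending only on $n$.  This follows from a two-case argument.  When $|x_0-x|\leq d$, the cube centered at $x_0$ of side length $3d$ contains both $x$ and $Q_0$, yielding $Mg(x)\gtrsim 1\gtrsim d^n(d+|x_0-x|)^{-n}$.  When $|x_0-x|>d$, the cube centered at $x$ of side length $6|x_0-x|$ contains $Q_0$, yielding $Mg(x)\gtrsim d^n|x_0-x|^{-n}\gtrsim d^n(d+|x_0-x|)^{-n}$.

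Next, raising to the $p$-th power and integrating against $w$,
\begin{align*}
\int_{\R^n}\frac{w(x)}{(d+|x_0-x|)^{np}}dx&\less d^{-np}\int_{\R^n}Mg(x)^pw(x)dx\\
&\less d^{-np}[w]_{A_p}^{p/(p-1)}\int_{\R^n}g(x)^pw(x)dx\\
&=d^{-np}[w]_{A_p}^{p/(p-1)}\int_{Q_0}w(x)dx<\infty,
\end{align*}
where the second inequality is the standard weighted bound for $M$ on $L^p(w)$ (valid since $w\in A_p$) and finiteness of the last integral holds because $A_p$ weights are locally integrable.

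There is no serious obstacle here.  The only mildly non-routine step is the pointwise lower bound on $Mg$, which is a one-line geometric check; once that is in hand, the conclusion is immediate from the $L^p(w)$ boundedness of $M$.
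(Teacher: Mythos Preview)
Your proof is correct and follows essentially the same route as the paper: bound $(d+|x_0-\cdot|)^{-n}$ pointwise by $d^{-n}Mg$ for $g$ the characteristic function of a ball or cube of scale $d$ about $x_0$, then invoke the $L^p(w)$ boundedness of $M$ and local integrability of $w$. The only cosmetic difference is that the paper uses the ball $B(x_0,d)$ and obtains the lower bound in a single step by testing $M$ on the ball $B(x,|x-x_0|+d)\supset B(x_0,d)$, whereas you use a cube and split into the cases $|x-x_0|\leq d$ and $|x-x_0|>d$; the content is the same.
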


\begin{proof}
We start  by noting that for any $x\in\R^n$
\begin{align*}
M\chi_{B(x_0,d)}(x)&\geq\frac{1}{|B(x,|x-x_0|+d)|}\int_{B(x,|x-x_0|+d)}\chi_{B(0,d)}(x)dx\\
&=\frac{|\chi_{B(x_0,d)}(x)|}{|B(x,|x-x_0|+d)|}=\frac{d^n}{(d+|x-x_0|)^n}.
\end{align*}
Then it follows that
\begin{align*}
\(\int_{\R^n}\frac{1}{(d+|x-x_0|)^{np}}w(x)dx\)^\frac{1}{p}\leq d^{-n}||M\chi_{B(x_0,d)}||_{L^p(w)}\less ||\chi_{B(x_0,d)}||_{L^p(w)}<\infty.
\end{align*}
Here we use the Hardy-Littlewood maximal operator bound on $L^p(w)$ and that $w\in L^1_{loc}$.
\end{proof}

\begin{proof}
First we restrict to the case $p=2$ and take $f_i\in L^{p_i}(w_i^{p_i})$ and $f_{i,k}\in L^{p_i}(w_i^{p_i})\cap L^{p_i}$ with $f_{i,k}\rightarrow f_i$ in $L^{p_i}(w_i^{p_i})$ as $k\rightarrow\infty$.  It follows that $f_{1,k}\otimes\cdots\otimes f_{m,k}\rightarrow f_1\otimes\cdots\otimes f_m$ as $k\rightarrow\infty$ in the weighted product Lebesgue space $L^{p_1}(w_1^{p_1})\cdots L^{p_m}(w_m^{p_m})$.  For all $x\in\R^n$
\begin{align*}
&|\Theta_t(f_1,...,f_m)(x)-\Theta_t(f_{1,k},...,f_{m,k})(x)|\\
&\hspace{.25cm}\leq\int_{\R^{mn}}|\theta_t(x,y_1,...,y_m)|\;|f_1(y_1)\cdots f_m(y_m)-f_{1,k}(y_1)\cdots f_{m,k}(y_m)|d\vec y\\
&\hspace{.25cm}\leq \prod_{i=1}^mt^{N-n}\(\int_{\R^n}\frac{w_i(y_i)^{-p_i'}dy_i}{(t+|x-y_i|)^{p_i'N}}\)^\frac{1}{p_i'}||f_1\otimes\cdots\otimes f_m-f_{1,k}\otimes\cdots\otimes f_{m,k}||_{L^{p_1}(w_1^{p_1})\cdots L^{p_m}(w_1^{p_m})},
\end{align*}
which tends to zero as $k\rightarrow\infty$ almost everywhere since $w_i^{p_i}\in A_{p_i}$ implies that $w_i^{-p_i'}\in A_{p_i'}$ and so the first term is finite almost everywhere by Lemma \ref{l:Lpw}.  Therefore $\Theta_t(f_{1,k},...,f_{m,k})\rightarrow\Theta_t(f_1,...,f_m)$ pointwise as $k\rightarrow\infty$ a.e. $x\in\R^n$.  Then by Fatou's lemma we have that
\begin{align*}
||S(f_1,...,f_m)||_{L^2(w^2)}^2&=\int_{\R^n}\int_0^\infty\lim_{k\rightarrow\infty}|\Theta_t(f_{1,k},...,f_{m,k})(x)|^2\frac{dt}{t}w(x)^2dx\\
&\leq\liminf_{k\rightarrow\infty}\int_{\R^n}\int_0^\infty|\Theta_t(f_{1,k},...,f_{m,k})(x)|^2\frac{dt}{t}w(x)^2dx\\
&\leq C_{n,m,w_1,...,w_m,p_1,...,p_m}\liminf_{k\rightarrow\infty}\prod_{i=1}^m||f_{i,k}||_{L^{p_i}(w_i^{p_i})}^2\\
&=C_{n,m,w_1,...,w_m,p_1,...,p_m}\prod_{i=1}^m||f_i||_{L^{p_i}(w_i^{p_i})}^2
\end{align*}
Therefore $S$ satisfies \eqref{Lpbound} for all $1<p_1,...,p_m<\infty$ satisfying \eqref{Holder} with $p=2$, for all $w_i^{p_i}\in A_{p_i}$, and for all $f_i\in L^{p_i}(w_i^{p_i})$.  We complete the proof by applying the multilinear extrapolation theorem of Grafakos-Martel \cite{GM}, which we state now.

\begin{theorem}[Grafakos-Martell \cite{GM}]\label{t:GM}
Let $1\leq q_1,...,q_m<\infty$ and $1/m\leq q<\infty$ be fixed indices that satisfy \eqref{Holder} and $T$ be an operator defined on $L^{q_1}(w_1^{q_1})\times\cdots\times L^{q_m}(w_m^{q_m})$ for all tuples of weights $w_i^{q_i}\in A_{q_i}$.  We suppose that for all $B>1$, there is a constant $C_0=C_0(B) >0$ such that for all tuples of weights $w_i^{q_i}\in A_{q_i}$ with $[w_i^{q_i}]_{A_{q_i}}\leq B$ and all functions $f_i\in L^{q_i}(w_i^{q_i})$, $T$ satisfies
\begin{align*}
||T(f_1,...,f_m)||_{L^q(w^q)}\leq C_0\prod_{i=1}^m||f_i||_{L^{q_i}(w_i^{q_i})}.
\end{align*}
Then for all indices $1<p_1,...,p_m<\infty$ and $1/m<p<\infty$ that satisfy \eqref{Holder}, all $B>1$, and all weights $w_i^{p_i}\in A_{p_i}$ with $[w_i^{p_i}]_{A_{p_i}}<B$, there is a constant $C=C(B)$ such that for all $f_i\in L^{p_i}(w_i^{p_i})$
\begin{align*}
||T(f_1,...,f_m)||_{L^p(w^p)}\leq C\prod_{i=1}^m||f_i||_{L^{p_i}(w_i^{p_i})}.
\end{align*}
\end{theorem}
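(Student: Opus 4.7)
The plan is to invoke the Rubio de Francia extrapolation algorithm, adapted to the multilinear setting. Given target exponents $1<p_1,\ldots,p_m<\infty$ and weights $w_i^{p_i}\in A_{p_i}$, the goal is to deduce the $(p_1,\ldots,p_m)$ bound for $T$ from the hypothesized $(q_1,\ldots,q_m)$ bound, with uniform control over the weight constants.

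First, consider the case $p\geq 1$. I would represent the $L^p(w^p)$ norm by duality as $\sup_h \int T(f_1,\ldots,f_m)\,h\,w^p\,dx$, where $h$ ranges over the unit ball of $L^{p'}(w^p)$. The classical Rubio de Francia algorithm produces, for a weight $v$ and parameter $s>1$, a dominating weight $Rv\geq v$ with $\|Rv\|_{L^s}\less \|v\|_{L^s}$ and $[Rv]_{A_1}\less \|M\|_{L^s(v\,dx)}$. Applying this algorithm to $h$ (and, in the multilinear case, also to auxiliary functions built from the $f_i$) and combining with Jones factorization, I would construct weights $W_i$ with $W_i^{q_i}\in A_{q_i}$ and $[W_i^{q_i}]_{A_{q_i}}$ controlled by a constant depending only on $\max_i [w_i^{p_i}]_{A_{p_i}}\leq B$, so that Hölder's inequality bounds the pairing by $\prod_i \|f_i\|_{L^{q_i}(W_i^{q_i})}$. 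Invoking the hypothesis at the tuple $(q_1,\ldots,q_m)$ with these weights, together with one final application of Hölder's inequality in the ratios $p_i/q_i$, would yield the desired bound $\prod_i \|f_i\|_{L^{p_i}(w_i^{p_i})}$.

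For the range $1/m<p<1$, direct duality fails since $L^p$ is not locally convex. The standard workaround is to run the iteration on each of the $m$ variables in sequence: fixing all but the $j$-th entry, view $T$ as a linear operator in $f_j$ and apply the linear (or off-diagonal) Rubio de Francia extrapolation theorem to extrapolate $q_j\to p_j$. Iterating over $j=1,\ldots,m$ avoids $L^p$ duality with $p<1$ entirely, provided one uses off-diagonal extrapolation (passing between $L^{q_j}$ on the input side and a different $L^\sigma$ on the output side) and carefully tracks the permissible weight classes after each step. Alternatively, one can establish the $p\geq 1$ case first and then lower $p$ by a linearization of $T$ against an appropriate extremal function.

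The hardest step will be selecting the auxiliary weights so that Hölder's inequality reproduces \emph{precisely} the weight structure required by the hypothesis. The combinatorial bookkeeping of distributing $w^p=\prod_i w_i^p$ and the dual factor $h$ among $m$ input positions, while ensuring that the resulting weights land in the correct $A_{q_i}$ classes with constants controlled uniformly in $B$, is where the real work lies. The quantitative hypothesis (the constant $C_0(B)$ depending only on $B$, not on the individual weights) is what enables the iteration to close, and the multilinear off-diagonal extrapolation machinery of Grafakos--Martell is designed exactly to carry out this bookkeeping in full generality.
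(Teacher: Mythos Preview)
The paper does not prove this theorem at all: Theorem~\ref{t:GM} is quoted from Grafakos--Martell \cite{GM} and then \emph{applied} (with $q_1=\cdots=q_m=2m$, $q=2$) to finish the proof of Theorem~\ref{t:main}. There is therefore no proof in the paper to compare your proposal against.

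That said, your sketch is broadly in the spirit of the original argument in \cite{GM}: Rubio de Francia's algorithm to manufacture $A_1$ weights with controlled constants, combined with factorization and H\"older, iterated across the $m$ input slots. One correction worth flagging: the way Grafakos--Martell actually handle the range $p<1$ is not by freezing $m-1$ slots and running linear extrapolation in the remaining one, but rather by working with pairs of nonnegative functions $(|T(\vec f)|,\prod_i|f_i|)$ and extrapolating the weighted inequality between them; this sidesteps duality entirely and avoids the issue you raised about $L^p$ not being locally convex. Your ``freeze all but one variable'' approach runs into trouble because the output space $L^p(w^p)$ has $p<1$ and $w^p$ is a product weight that mixes all $m$ factors, so the single-variable problem is not a standard linear extrapolation instance. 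If you intend to write out a full proof rather than cite \cite{GM}, you should follow the pair-of-functions formulation.
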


We may take, for example, $q_1=\cdots=q_m=2m$ and hence $q=2$.  Then we have just proved that for all $B>1$ and $w_i^{q_i}\in A_{q_i}$ with $[w_i^{q_i}]_{A_{q_i}}\leq B$ that
\begin{align*}
||S(f_1,...,f_m)||_{L^2(w^2)}&\leq C_{n,m,q_1,...,q_m}C_{m,n,p_1,...,p_m,w_1,...,w_m}\prod_{i=1}^m||f_i||_{L^{q_i}(w_i^{q_i})}
\end{align*}
where $C_{m,n,w_1,...,w_m,q_1,...,q_m}$ is defined in \eqref{fullconstant}.  Since $C_{m,n,w_1,...,w_m,q_1,...,q_m}$ is an increasing some of power functions of $[w_i^{q_i}]_{A_{q_i}}$, one can define $C_0(B)$ by replacing the weight constants with $B$ in \eqref{fullconstant} times a constant independent of the weights,
\begin{align*}
C_0(B)=C_{n,m,q_1,...,q_m}\[\prod_{i=1}^m2B^{\max(1,1/(q_i-1))+\max(1/2,1/(q_i-1))}+||\mu||_{\mathcal{SC}}^{m/2}\prod_{i=1}^mB^{1/(q_i-1)}\].
\end{align*}
which verifies the hypotheses of Theorem \ref{t:GM} for $S$.  Therefore for all $B>1$, there exists $C$ depending on $B,n,m,q_1,...,q_m$ such that
\begin{align*}
||S(f_1,...,f_m)||_{L^p(w^p)}&\leq C\prod_{i=1}^m||f_i||_{L^{p_i}(w_i^{p_i})}
\end{align*}
for all $1<p_1,...,p_m<\infty$, $w_i^{p_i}\in A_{w_i}$ with $[w_i^{p_i}]_{A_{p_i}}\leq B$, and $f_i\in L^{p_i}(w_i^{p_i})$.

\end{proof}

We now prove Theorem \ref{t:mconv}.

\begin{proof}
The implications {\it (iv)} $\Rightarrow$ {\it (iii)} $\Rightarrow$ {\it (ii)} $\Rightarrow$ {\it (i)} have already been proved in a more general context.  So it is sufficient to show that {\it (i)} $\Rightarrow$ {\it (iv)}.  Since $\theta_t(x,y_1,...,y_m)=t^{-mn}\Psi^t(t^{-1}(x-y_1),...,t^{-1}(x-y_m))$, it follows that $\Theta_t(1,...,1)(x)$ is constant constant in $x$:  For all $x\in\R^n$
\begin{align*}
\Theta_t(1,...,1)(x)&=\int_{\R^{mn}}t^{-mn}\Psi^t(t^{-1}(x-y_1),...,t^{-1}(x-y_m))d\vec y\\
&=\int_{\R^{mn}}\Psi^t(y_1,...,y_m)d\vec y=F(t)
\end{align*}
where the last line here we take as the definition of $F$.  But we have assumed that $\Theta_t$ satisfies the Carleson condition, and hence $|F(t)|^2\frac{dt}{t}dx$ is a Carleson measure.  So the strong Carleson condition follows:  For all cubes $Q\subset\R^n$
\begin{align*}
\int_0^{\ell(Q)}|\Theta_t(1,...,1)(x)|^2\frac{dt}{t}&=\frac{1}{|Q|}\int_Q\int_0^{\ell(Q)}|F(t)|^2\frac{dt}{t}dx\less1.
\end{align*}
If we assume also that $\Psi^t=\Psi$ is constant in $t$, then it follows that $F(t)=c_0$ is a constant function.  But then $|c_0|^2\frac{dt}{t}dx$ is a Carleson measure, and hence integrable on $Q\times(0,\ell(Q)]$ for all cubes $Q\subset\R^n$.  Then it follows that $c_0=0$ when $\Psi^t$ is constant in $t$, which completes the proof.
\end{proof}

\bibliographystyle{amsplain}

\end{document}